\newtheorem{theorem}{Theorem}[section]
\newtheorem*{theorem*}{Theorem} % unnumbered theorem
\newtheorem{corollary}[theorem]{Corollary}
\newtheorem{lemma}[theorem]{Lemma}
\newtheorem{proposition}[theorem]{Proposition}
\theoremstyle{remark}
\newtheorem*{remark}{Remark}
\DeclareMathOperator*{\Ber}{Ber}
\DeclareMathOperator*{\Beta}{Beta}
\DeclareMathOperator*{\Tri}{Tri}
\DeclareMathOperator*{\Bin}{Bin}
\DeclareMathOperator*{\Unif}{Unif}
\newcommand{\R}{\mathbb{R}}
\newcommand{\E}{\mathbb{E}}
\newcommand{\edr}{\mathrm{e}}
\newcommand\opt{\mathrm{opt}}
\DeclareMathOperator{\Var}{\mathbb{V}}
\newcommand{\beq}{\begin{equation}}
\newcommand{\eeq}{\end{equation}}
\newcommand{\beqq}{\begin{equation*}}
\newcommand{\eeqq}{\end{equation*}}
\newcommand{\bea}{\begin{eqnarray}}
\newcommand{\eea}{\end{eqnarray}}
\newcommand{\beaa}{\begin{eqnarray*}}
\newcommand{\eeaa}{\end{eqnarray*}}
\newcommand{\xaxis}{Varying $\lambda$ on the $x$-axis.\xspace}
\newcommand{\bernoullitext}{The blue curve represents the $h$ function of the symmetric $\Ber\left(\frac{1}{2}\right)$.\xspace}
\newcommand{\addresses}[1]{
\par {\raggedright #1
\vspace{1.4em}
\noindent\par}
}
\begin{document}

\title{On strict sub-Gaussianity, optimal proxy variance\\
and symmetry for bounded random variables}

\author{Julyan Arbel$^{1*}$, Olivier Marchal$^{2}$, Hien D. Nguyen$^{3}$}
\date{}

\maketitle

\addresses{$^{*}$Corresponding author, email: julyan.arbel@inria.fr.\\
$^{1}$Universit\'e Grenoble Alpes, Inria, CNRS, Grenoble INP, LJK, 38000 Grenoble, France.\\
$^{2}$Universit\'e de Lyon, CNRS UMR 5208, Universit\'e Jean Monnet, Institut Camille Jordan, 69000 Lyon, France.\\
$^{3}$Department of Mathematics and Statistics, La Trobe University, Bundoora Melbourne 3086, Victoria Australia. \\
}

{\small 
\begin{center}
    {\bf Abstract}
\end{center}
\begin{quote}
We investigate the sub-Gaussian property for almost surely bounded random variables. 
If sub-Gaussianity \textit{per se} is \textit{de facto} ensured by the bounded support of said random variables, then exciting research avenues remain open.
Among these questions is how to characterize the optimal sub-Gaussian proxy variance? Another question is how to characterize \textit{strict} sub-Gaussianity, defined by a proxy variance equal to the (standard) variance? 
We address the questions in proposing conditions based on the study of functions variations. A particular focus is given to the relationship between strict sub-Gaussianity and symmetry of the distribution. In particular, we demonstrate that symmetry is neither sufficient nor necessary for strict sub-Gaussianity. In contrast, simple necessary conditions on the one hand, and simple sufficient conditions on the other hand, for strict sub-Gaussianity are provided. 
These results are illustrated via various applications to a number of bounded random variables, including Bernoulli, beta, binomial, uniform, Kumaraswamy, and triangular distributions.
\end{quote}
}
% We investigate the sub-Gaussian property for almost surely bounded random variables. 
% If sub-Gaussianity per se is de facto ensured by the bounded support of said random variables, then exciting research avenues remain open.
% Among these questions is how to characterize the optimal sub-Gaussian proxy variance? Another question is how to characterize \textit{strict} sub-Gaussianity, defined by a proxy variance equal to the (standard) variance? 
% We address the questions in proposing conditions based on the study of functions variations. A particular focus is given to the relationship between strict sub-Gaussianity and symmetry of the distribution. In particular, we demonstrate that symmetry is neither sufficient nor necessary for strict sub-Gaussianity. In contrast, simple necessary conditions on the one hand, and simple sufficient conditions on the other hand, for strict sub-Gaussianity are provided. 
% These results are illustrated via various applications to a number of bounded random variables, including Bernoulli, beta, binomial, uniform, Kumaraswamy, and triangular distributions.

\newpage
%%%%%%%%%%%%%%%%%%%%%%%%%%%%%%%%%%%%%%%%%%%%%%%%
\section{Introduction}\label{sec:intro}
%%%%%%%%%%%%%%%%%%%%%%%%%%%%%%%%%%%%%%%%%%%%%%%%

Sub-Gaussian distributions are probability distributions that have tail probabilities that are upper bounded by Gaussian tails. More specifically, a random variable $X$ with finite mean $\mu=\E[X]$ is {sub-Gaussian} if there exists $\sigma^2>0$ such that:
\begin{align}\label{eq:def}
\E[\exp(\lambda (X-\mu))]\le\exp\left(\frac{\lambda^2\sigma^2}{2}\right)\,\,\text{, for all } \lambda\in\R.
\end{align}
The constant $\sigma^2$ is called a \textit{proxy variance} and  $X$ is termed $\sigma^2$-sub-Gaussian. For a sub-Gaussian random variable $X$, the smallest  proxy variance is called the \textbf{optimal proxy variance} and is denoted $\sigma_\opt^2(X)$, or simply $\sigma_\opt^2$.
The variance always provides a lower bound on the optimal proxy variance: $\Var[X]\leq \sigma_\opt^2(X)$. When $\sigma_\opt^2(X)=\Var[X]$, $X$ is said to be \textit{strictly} sub-Gaussian.

The sub-Gaussian property is increasingly studied and used in various fields of probability and statistics, primarily due to its intricate link with concentration inequalities \citep{boucheron2013concentration,raginsky2013concentration}, transportation inequalities \citep{bobkov1999exponential,van2014probability} and PAC-Bayes inequalities \citep{catoni2007pac}. Applications include the missing mass problem \citep{mcallester2003concentration, berend2013concentration,ben2017concentration}, multi-armed bandit problems \citep{bubeck2012regret} and singular values of random matrices \citep{rudelson2010non}. 

This paper focuses on the study of almost surely bounded random variables, where Bernoulli, beta, binomial, Kumaraswamy \citep{jones2009kumaraswamy} or triangular \citep{kotz2004beyond} distributions are taken as standard and common examples. 
If sub-Gaussianity \textit{per se} is \textit{de facto} ensured because the support of said random variables is bounded, then exciting research avenues remain open in the area. 
Among these questions are (a) how to obtain the optimal sub-Gaussian proxy variance, and (b) how to characterize \textit{strict} sub-Gaussianity? 

Regarding question (a), we propose general conditions characterizing the  optimal sub-Gaussian proxy variance, thus generalizing previous work \citep{marchal2017sub} that was tailored to the beta and Dirichlet distributions. Several techniques based on studying variations of functions are proposed. In illustrating our results with the Bernoulli distribution, we prove as a by-product of Proposition~\ref{prop:bernoulli} the uniqueness of a global maximum of a function that was observed by \cite{berend2013concentration}  ``as an intriguing open problem''.

As for  question (b), it turns out that the \textit{symmetry} of the distribution plays a crucial role. By symmetry, we mean symmetry with respect to the mean $\mu=\E[X]$. That is, we say that $X$ is symmetrically distributed if $X$ and $2\mu-X$ have the same distribution. Thus, if $X$ has a density, this means that the density is symmetric with respect to $\mu$. 
A simple, and remarkable, equivalence holds for most of the standard bounded random variables. 

\begin{proposition}
    \label{prop:symmetry-strict}
	Let $X$ be a Bernoulli, beta, binomial, Kumaraswamy or triangular random variable. Then,
	\begin{center}
	$X$ is symmetric $\Longleftrightarrow$ 	$X$ is strictly sub-Gaussian.
	\end{center}
\end{proposition}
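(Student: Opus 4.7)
The strategy is to split the equivalence into a common local obstruction plus a case-by-case verification across the five families. First I would extract a universal necessary condition. For any random variable $X$ with mean $\mu$ and variance $\sigma^2$, the centered cumulant generating function expands as
$$K(\lambda) \;=\; \log \E[\exp(\lambda(X-\mu))] \;=\; \frac{\sigma^2 \lambda^2}{2} + \frac{\kappa_3}{6}\lambda^3 + O(\lambda^4),$$
with $\kappa_3 = \E[(X-\mu)^3]$. Strict sub-Gaussianity demands $K(\lambda) \le \sigma^2 \lambda^2/2$ for every $\lambda \in \R$. Near $0$ the residual $K(\lambda) - \sigma^2\lambda^2/2$ is dominated by $\kappa_3 \lambda^3/6$, and this cubic changes sign with $\lambda$ unless $\kappa_3 = 0$. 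Hence strict sub-Gaussianity forces a vanishing third central moment.

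With this local obstruction in hand, the direction ``strictly sub-Gaussian $\Rightarrow$ symmetric'' reduces, within each family, to showing that $\kappa_3 = 0$ pins down the symmetric parameter configuration. Direct computations give $\kappa_3 = p(1-p)(1-2p)$ for Bernoulli$(p)$ and its $n$-fold rescaling for Binomial$(n,p)$, both vanishing iff $p = 1/2$; the standard skewness formula for Beta$(\alpha,\beta)$ vanishes iff $\alpha = \beta$; the explicit third central moment of the triangular law on $[a,b]$ with mode $c$ vanishes iff $c = (a+b)/2$; and a Beta-function calculation of the Kumaraswamy$(a,b)$ moments shows $\kappa_3 = 0$ iff $(a,b) = (1,1)$. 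In each family the parameter locus on which $\kappa_3$ vanishes coincides exactly with symmetry about the mean.

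For the converse direction, ``symmetric $\Rightarrow$ strictly sub-Gaussian'', I would treat each symmetric representative in turn. Bernoulli$(1/2)$ is the classical inequality $\cosh(\lambda/2) \le \exp(\lambda^2/8)$. Binomial$(n,1/2)$ inherits this by taking $n$-fold products of MGFs, using the general fact that if $X$ and $Y$ are independent strictly sub-Gaussian variables then $\sigma_{\opt}^2(X+Y) \le \sigma_{\opt}^2(X) + \sigma_{\opt}^2(Y) = \Var(X) + \Var(Y) = \Var(X+Y) \le \sigma_{\opt}^2(X+Y)$, forcing equality. Kumaraswamy$(1,1)$ coincides with Uniform$(0,1)$, which is known to be strictly sub-Gaussian. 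The symmetric triangular on $[a,b]$ with mode $(a+b)/2$ is, up to affine rescaling, the convolution of two independent uniforms, so strict sub-Gaussianity transfers by the same multiplicativity argument. The substantive case is Beta$(\alpha,\alpha)$, where I would invoke the sharp optimal-proxy-variance characterisation of \citet{marchal2017sub} cited in the paper, which identifies $\sigma_\opt^2$ exactly and confirms equality with the variance in the symmetric regime.

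The main obstacle is precisely the Beta$(\alpha,\alpha)$ branch of the converse, as it is the only family where strict sub-Gaussianity is not reducible either to a classical inequality or to a convolution identity and instead requires a genuine global study of the MGF. A secondary technical worry arises on the Kumaraswamy side: one must verify that no asymmetric parameter pair $(a,b)$ produces $\kappa_3 = 0$ by accident, since otherwise the local cubic obstruction would be too weak to close the argument. The explicit Beta-function expression for the third central moment rules this out, but careful algebraic bookkeeping is needed.
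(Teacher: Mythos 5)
Your overall architecture (the vanishing-third-cumulant obstruction for the forward direction, plus case-by-case verification of the converse) matches the paper's, and several of your converse arguments are sound — in particular the subadditivity-plus-variance-lower-bound identity $\Var(X+Y)\le\sigma_\opt^2(X+Y)\le\sigma_\opt^2(X)+\sigma_\opt^2(Y)$ cleanly handles the binomial, and realising the symmetric triangular law as a convolution of two independent uniforms is a legitimate shortcut that the paper itself only mentions in a remark. However, there is a genuine gap in the Kumaraswamy branch. You assert that a Beta-function computation shows $\kappa_3=0$ for Kumaraswamy$(\alpha,\beta)$ if and only if $(\alpha,\beta)=(1,1)$, and you flag this as the step needing ``careful algebraic bookkeeping.'' That claim is false: the zero-skewness locus is the entire curve $\alpha=\bigl(\beta-(\beta-1)2^{1/\beta}\bigr)^{-1}$ of equation~\eqref{eq:zero-skewness-kuma}, which contains infinitely many asymmetric parameter pairs (e.g.\ $\beta=2$ gives $\alpha=1/(2-\sqrt2)\approx1.71$). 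Worse, along this curve the excess kurtosis is also negative, so even supplementing your cubic obstruction with the fourth-cumulant necessary condition cannot close the argument. Ruling out these asymmetric candidates requires a genuinely global study of the function $h(\lambda)=2\lambda^{-2}\mathcal{K}(\lambda)$ — the paper resorts to a numerical verification that its maximiser is never at $\lambda=0$ off the symmetric point — and no local moment expansion will substitute for it.

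A secondary weakness: you take the strict sub-Gaussianity of the uniform distribution as ``known,'' but it is not a consequence of any classical inequality (Hoeffding's lemma only gives the proxy variance $(b-a)^2/4$, three times the variance), and both your Kumaraswamy$(1,1)$ and symmetric-triangular branches rest on it. The paper proves it by showing, via the second-order ODE satisfied by $h$, that every critical point of $h$ is a local maximum, hence that $\lambda=0$ is the unique global maximiser; some such argument (or an explicit citation) is needed for your proof to be self-contained.
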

The result is known for the beta distribution \citep{marchal2017sub}. In this article, we provide proofs for the Bernoulli, binomial, Kumaraswamy and triangular distributions. 

From Proposition \ref{prop:symmetry-strict}, it may be tempting to conjecture that the equivalence holds true for \textit{any} random variable having a bounded support. However, we establish that this is not the case. This was actually one of the starting points for the present work. 
More precisely, we shall provide a proof of the following result.

\begin{proposition}
    \label{prop:symmetry-not-NSC}
    Symmetry of $X$ is neither
    \begin{itemize}
        \item[(i)] a sufficient condition, nor
        \item[(ii)] a necessary condition,
    \end{itemize} 
    for the strict sub-Gaussian property.
\end{proposition}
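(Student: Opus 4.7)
\medskip

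The plan is to prove the two assertions by independent explicit constructions, both driven by the Taylor expansion of
\begin{equation*}
g(\lambda) \;:=\; e^{\lambda^2 \sigma^2/2} - \E[e^{\lambda (X-\mu)}]
\end{equation*}
at $\lambda=0$. Writing $\mu_k = \E[(X-\mu)^k]$, one obtains
\begin{equation*}
g(\lambda) \;=\; -\frac{\mu_3}{6}\lambda^3 + \frac{3\sigma^4 - \mu_4}{24}\lambda^4 + O(\lambda^5),
\end{equation*}
which encodes the two elementary necessary conditions $\mu_3 = 0$ and $\mu_4 \leq 3\sigma^4$ for strict sub-Gaussianity. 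These two necessary conditions will drive the two halves of the proof.

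\medskip

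\textbf{Part (i): symmetry is not sufficient.} First I would exhibit a symmetric random variable whose fourth central moment is too large relative to $3\sigma^4$. A natural candidate is the symmetric three-atom distribution with $\Pr(X=\pm 1)=p/2$ and $\Pr(X=0)=1-p$ for some $p\in(0,1)$. Direct calculation yields $\sigma^2=p$ and $\mu_4=p$, so that
\begin{equation*}
g(\lambda) \;=\; \frac{3p^2-p}{24}\,\lambda^4 + O(\lambda^6).
\end{equation*}
Whenever $p<1/3$ the leading coefficient is strictly negative, hence $g$ is strictly negative for small non-zero $\lambda$ and $X$ is symmetric but not strictly sub-Gaussian. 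This is the easy half.

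\medskip

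\textbf{Part (ii): symmetry is not necessary.} Here the necessary condition $\mu_3=0$ must be achieved without symmetry. A convenient candidate is the three-atom distribution supported on $\{-2,1,3\}$ with respective probabilities $(0.4,\,0.5,\,0.1)$, for which elementary computation gives
\begin{equation*}
\E[X]=0,\qquad \sigma^2=3,\qquad \mu_3=0,\qquad \mu_4=15 < 27 = 3\sigma^4.
\end{equation*}
The support and mass are clearly asymmetric about the mean. It remains to verify the global inequality
\begin{equation*}
\phi(\lambda) \;:=\; 0.4\,e^{-2\lambda} + 0.5\,e^{\lambda} + 0.1\,e^{3\lambda} \;\leq\; e^{3\lambda^2/2}, \qquad \lambda\in\R,
\end{equation*}
which, combined with the variance identification, is precisely the strict sub-Gaussian property. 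Positivity of $g$ near zero follows from the expansion $g(\lambda)=\tfrac{1}{2}\lambda^4+O(\lambda^5)$. For the tails, the crude bound $\phi(\lambda)\leq e^{3|\lambda|}$ combined with $3|\lambda|\leq 3\lambda^2/2$ settles $|\lambda|\geq 2$, and a symmetric argument settles $\lambda\leq -4/3$.

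\medskip

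The main obstacle is the last step of (ii): rigorously certifying $g\geq 0$ on the remaining compact interval, where the inequality is very tight near the origin. I would carry this out by analysing the sign of $g'$ (equivalently, locating the solutions of $\lambda\sigma^2 e^{\lambda^2\sigma^2/2}=\phi'(\lambda)$) and bounding $g$ from below at the finitely many critical points so obtained. An alternative is to invoke one of the sufficient conditions for strict sub-Gaussianity developed earlier in the paper, specialised to this discrete distribution, turning the global verification into a finite check.
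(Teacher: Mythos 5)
Your part (i) is exactly the paper's argument: the same symmetric three-atom mixture $\tfrac{\eta}{2}(\delta_{-1}+\delta_{1})+(1-\eta)\delta_{0}$, shown to have strictly positive excess kurtosis for $\eta<\tfrac13$ and hence ruled out by the necessary condition $\kappa_4\le 0$ of Proposition~\ref{prop:skewness}; this half is complete and correct. For part (ii) you pick a different asymmetric three-atom law (support $\{-2,1,3\}$ with weights $(0.4,0.5,0.1)$) from the paper's choice ($\{-2,-\tfrac12,\tfrac54\}$ with weights $(\tfrac1{13},\tfrac47,\tfrac{32}{91})$); your moment computations check out ($\mu=0$, $\sigma^2=3$, $\mu_3=0$, $\mu_4=15<27=3\sigma^4$, and $g(\lambda)=\tfrac12\lambda^4+O(\lambda^5)$), and the example does appear to be strictly sub-Gaussian. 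On the remaining gap --- certifying $g\ge 0$ globally --- the honest comparison is that the paper does not supply an analytic proof either: it verifies that $h$ attains its maximum at $\lambda=0$ by inspecting a plot (Figure~\ref{fig:h-asym-strict}), so your explicit tail reduction (to $|\lambda|\ge 2$ on the right and $|\lambda|\ge \tfrac43$ on the left) followed by a finite check on the residual compact interval is, if anything, a more carefully delimited version of the same computational verification. One caveat: the ``alternative'' you float of invoking the paper's sufficient condition (Proposition~\ref{prop:symmetric-moments}) cannot work here, since that condition is stated only for distributions symmetric about their mean and your example is deliberately asymmetric; you would have to carry out the compact-interval verification directly.
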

The proof of this result is presented in Section~\ref{sec:strict-symmetry}, where  we demonstrate that \textit{(i)} there exists simple symmetric mixtures of distributions (e.g., a two-components mixture of beta distribution and a three-components mixture distribution of Dirac masses) which are not strictly sub-Gaussian, and that \textit{(ii)} there exists an asymmetric three-components mixture of Dirac masses which is strictly sub-Gaussian.  

Before delving into detailing the strict sub-Gaussianity property in Section~\ref{sec:strict}, we first investigate some conditions that characterize the optimal proxy variance $\sigma_\opt^2$, in Section~\ref{sec:optimal-proxy}. 
The results of Sections \ref{sec:optimal-proxy} and \ref{sec:strict} are then illustrated on a number of standard random variables on bounded supports, in Section~\ref{sec:illustrations}. Technical results are presented in  Appendix~\ref{sec:appendix}.

\newpage

%%%%%%%%%%%%%%%%%%%%%%%%%%%%%%%%%%%%%%%%%%%%%%%%
\section[Characterizations of the optimal proxy variance]{Characterizations of the optimal proxy variance $\sigma_\opt^2$}\label{sec:optimal-proxy}
%%%%%%%%%%%%%%%%%%%%%%%%%%%%%%%%%%%%%%%%%%%%%%%%

Let $X$ be an almost surely bounded random variable with mean $\mu=\E[X]$. Then, $X$ is sub-Gaussian and satisfies Definition~\ref{eq:def} for some $\sigma^2>0$.

An equivalent definition is that 
$$\forall\, \lambda\in \mathbb{R}\,:\, \sigma^2\geq \frac{2}{\lambda^2}\mathcal{K}(\lambda),$$
where the function $\mathcal{K}$, defined on $\R$ by: $\mathcal{K}(\lambda)=\ln \E[\exp(\lambda[X-\mu])]$, corresponds to the cumulants generating function of $X-\mu$. 
Thus the optimal proxy variance $\sigma_\opt^2$ can be defined as the supremum
\begin{equation}\label{eq:supremum}
  \sigma_\opt^2=\sup_{\lambda\in \mathbb{R}}\text{ } \frac{2}{\lambda^2}\mathcal{K}(\lambda).
\end{equation}
If $X$ is almost surely bounded, then this supremum is attained, see Lemma~\ref{lem:sup-equals-max} for details. 
Note that the function $h$, defined on $\R$ by
\begin{equation}\label{eq:h-def}
h(\lambda)=\frac{2}{\lambda^2}\mathcal{K}(\lambda),
\end{equation}
is continuous at $\lambda=0$,  since a standard series expansion demonstrates that:
\begin{equation}\label{eq:h(0)=Var(X)}
    h(\lambda)\overset{\lambda \to 0}{=}\Var[X]+o(1).
\end{equation}
Moreover, $h$ may never vanish. In fact, since the logarithm function is strictly concave, Jensen's inequality implies that for any $\lambda\in\mathbb{R}$,
\begin{align}\label{eq:h-positive}
    h(\lambda)=\frac{2}{\lambda^2}\ln\E[\edr^{\lambda(X-\mu )}]>\frac{2}{\lambda^2}\E[\ln\edr^{\lambda(X-\mu )}]=0.
\end{align}
Equation~\eqref{eq:h(0)=Var(X)}  also explains directly why $\sigma_\opt^2\geq \Var[X]$, since the variance is the value of the right-hand side (r.h.s.) function at $\lambda=0$ and thus the maximum is always greater or equal to it. 
We therefore have the following result.%
\begin{proposition}[Characterization of $\sigma_\opt^2$ by $h$]\label{prop:general-proxy-variance}
The optimal proxy variance is given by:    
\begin{equation}\label{eq:sigma_opt-as-max-of-h}
    \sigma_\opt^2=\max_{\lambda\in \mathbb{R}}\text{ }h(\lambda)=\max_{\lambda\in \mathbb{R}}\text{ } \frac{2}{\lambda^2}\mathcal{K}(\lambda).
\end{equation}
\end{proposition}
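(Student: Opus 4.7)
The plan is to observe that the proposition is essentially a repackaging of material already presented. Equation~\eqref{eq:supremum} identifies $\sigma_\opt^2$ with $\sup_{\lambda \in \mathbb{R}} h(\lambda)$ directly from the equivalent reformulation of the sub-Gaussian condition, and Lemma~\ref{lem:sup-equals-max}, cited just after that equation, promotes the supremum to a maximum under the assumption that $X$ is almost surely bounded. The proof then reduces to a single line combining these two facts.

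Since the real content lies in the attainment step, my write-up would expand on why that step holds. The idea is to establish two properties of $h\colon \mathbb{R}\to\mathbb{R}$: continuity on all of $\mathbb{R}$, and decay to $0$ at $\pm\infty$. Continuity on $\mathbb{R}\setminus\{0\}$ is immediate since $X$ being almost surely bounded makes $\lambda\mapsto \mathcal{K}(\lambda)=\ln\E[\edr^{\lambda(X-\mu)}]$ real-analytic on $\mathbb{R}$, while continuity at $\lambda=0$ with value $\Var[X]$ is exactly the series expansion~\eqref{eq:h(0)=Var(X)}. For the decay, setting $M:=\|X-\mu\|_{\infty}<\infty$ yields $\mathcal{K}(\lambda)\le |\lambda|M$, so combined with~\eqref{eq:h-positive} one obtains $0<h(\lambda)\le 2M/|\lambda|$, and hence $h(\lambda)\to 0$ as $|\lambda|\to\infty$.

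With those two properties in hand, in the non-degenerate case $\Var[X]>0$ we have $h(0)=\Var[X]>0$, so the decay at infinity forces the supremum to be attained on a compact interval $[-R,R]$ for some $R>0$, and the extreme value theorem delivers a maximizer. The degenerate case $X=\mu$ almost surely makes $\mathcal{K}\equiv 0$ and $h\equiv 0$, so the maximum is trivially $0=\Var[X]$. I do not anticipate any real obstacle: the whole proposition reduces to an elementary extreme-value-theorem argument after establishing the $O(1/|\lambda|)$ decay of $h$ at infinity, a step which is itself immediate from almost-sure boundedness.
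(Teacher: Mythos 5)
Your proposal is correct and follows essentially the same route as the paper: the paper also reduces the proposition to Equation~\eqref{eq:supremum} together with Lemma~\ref{lem:sup-equals-max}, whose proof is exactly your argument (the bound $\mathcal{K}(\lambda)\le 2M|\lambda|$ giving $h(\lambda)\le 4M/|\lambda|\to 0$, combined with continuity and positivity of $h$ to locate the maximum at a finite $\lambda$). Your explicit treatment of the degenerate case and the appeal to the extreme value theorem on a compact interval only make precise what the paper leaves implicit.
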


We may now present a necessary (but not always sufficient) system of equations for $\sigma_\opt^2$. Indeed, since the maximum is achieved at a finite point, then this point must necessarily be a zero of the derivative of $h$, if $h$ is
differentiable (we will denote by $\mathcal{D}^k$ the space of functions that are $k$ times differentiable on $\R$ and by $\mathcal{C}^k$ the space of functions that are $k$ times differentiable on $\R$ and for which the $k^{\text{th}}$ derivative is continuous on $\R$).

Thus, we obtain the following corollary.
\begin{corollary}[Necessary condition for $\sigma_\opt^2$, with respect to $h$]\label{cor:nece-cond-h-maximum}
Let $\sigma_\opt^2$ be the optimal proxy variance, and assume that $h$ and $\mathcal{K}$ are $\mathcal{D}^1$. 
Then there exists a finite $\lambda_0$, such that
	\begin{equation}
    \sigma_\opt^2=h(\lambda_0) \text{ and } h'(\lambda_0)=0,
\end{equation}
which is equivalent to
\begin{equation}
    \sigma_\opt^2=\frac{2}{\lambda_0^2}\mathcal{K}(\lambda_0) \text{ and } \lambda_0\mathcal{K}'(\lambda_0)=2\mathcal{K}(\lambda_0),
\end{equation}
using only the centered cumulants generating function $\mathcal{K}$.
\end{corollary}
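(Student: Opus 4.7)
The plan is to combine Proposition~\ref{prop:general-proxy-variance} with Fermat's interior extremum theorem. By Proposition~\ref{prop:general-proxy-variance}, the supremum defining $\sigma_\opt^2$ is actually attained as a maximum, so there exists a finite $\lambda_0\in\R$ with $h(\lambda_0)=\sigma_\opt^2$. Since $h\in\mathcal{D}^1$ on the whole real line and $\R$ has no boundary, the maximizer $\lambda_0$ is necessarily an interior critical point. Fermat's theorem then yields $h'(\lambda_0)=0$, which is the first form of the stated condition.

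To convert to the formulation in terms of the cumulant generating function $\mathcal{K}$, I would differentiate the identity $\lambda^2 h(\lambda)=2\mathcal{K}(\lambda)$, valid for all $\lambda$ (at $\lambda=0$ through the continuous extension $h(0)=\Var[X]$, and in the $\mathcal{D}^1$ sense by the assumption on both $h$ and $\mathcal{K}$). This yields
\[
2\lambda\, h(\lambda)+\lambda^2 h'(\lambda)=2\mathcal{K}'(\lambda).
\]
Evaluating at $\lambda_0$ and using $h'(\lambda_0)=0$ leaves $\lambda_0 h(\lambda_0)=\mathcal{K}'(\lambda_0)$, i.e., $\lambda_0\sigma_\opt^2=\mathcal{K}'(\lambda_0)$. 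Multiplying both sides by $\lambda_0$ and substituting $\lambda_0^2\sigma_\opt^2=2\mathcal{K}(\lambda_0)$ gives $\lambda_0\mathcal{K}'(\lambda_0)=2\mathcal{K}(\lambda_0)$, as required. The reverse implication is immediate from the same derivative identity, since $\lambda_0^3 h'(\lambda_0)=2[\lambda_0\mathcal{K}'(\lambda_0)-2\mathcal{K}(\lambda_0)]$ for $\lambda_0\neq 0$.

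The only real subtlety is the degenerate case $\lambda_0=0$. Here $\sigma_\opt^2=h(0)=\Var[X]$ by the expansion recalled around equation \eqref{eq:h(0)=Var(X)}, i.e.\ $X$ is strictly sub-Gaussian, and the condition $\lambda_0\mathcal{K}'(\lambda_0)=2\mathcal{K}(\lambda_0)$ collapses to the trivial identity $0=0$ since $\mathcal{K}(0)=0$. The displayed formula $\sigma_\opt^2=\tfrac{2}{\lambda_0^2}\mathcal{K}(\lambda_0)$ in that instance must be understood via the continuous extension of $h$ at the origin. There is no substantial obstacle in the argument: Proposition~\ref{prop:general-proxy-variance} already carries the analytical content (attainment of the supremum at a finite point), after which the corollary reduces to Fermat's theorem and a one-line differentiation; the only thing to watch is the removable singularity at $\lambda=0$.
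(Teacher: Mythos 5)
Your proposal is correct and follows essentially the same route as the paper: attainment of the maximum at a finite point (Proposition~\ref{prop:general-proxy-variance} via Lemma~\ref{lem:sup-equals-max}), Fermat's theorem for $h'(\lambda_0)=0$, and differentiation of $\lambda^2 h(\lambda)=2\mathcal{K}(\lambda)$ for the equivalence with the $\mathcal{K}$ formulation. Your explicit treatment of the removable singularity at $\lambda_0=0$ is a point the paper leaves implicit, but it introduces no new idea.
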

In practice, the previous set of equations has to used with caution, since there may be more than one solution to the second equation involving the derivative of $h$ (or that of $\mathcal{K}$), and a global maximizer is required to be picked among the stationary points, instead of a minimizer or a local maximizer. 
On a case-by-case basis, the following approach based on ordinary differential equations (ODEs), satisfied by  $h$, can be used to demonstrate that it has a unique global maximum. 
\begin{proposition}\label{prop:ODEs}
 If the function  $h$ is $\mathcal{C}^2$, then it is the unique solution of the ordinary differential equations:
\begin{equation}\label{eq:ODE-first-order}
    h'(\lambda)+\frac{2}{\lambda}h(\lambda)=\frac{2}{\lambda^2}\mathcal{K}'(\lambda) \text{ with } h(0)=\Var[X],
\end{equation}
or
\begin{equation}\label{eq:ODE-second-order}
    h''(\lambda)+\frac{3}{\lambda}h'(\lambda)=\frac{2}{\lambda}\left(\frac{\mathcal{K}'(\lambda)}{\lambda}\right)' \text{ with } h(0)=\Var[X] \text{  and } h'(0)=\frac{1}{3}\E[(X-\mu)^3]. 
\end{equation}   
\end{proposition}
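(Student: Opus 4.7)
The plan is to derive both ODEs by direct differentiation of the defining identity $\lambda^{2}h(\lambda)=2\mathcal{K}(\lambda)$, and then obtain uniqueness by recasting each ODE as an exact derivative so that the singularity at $\lambda=0$ is absorbed by the $\mathcal{C}^{2}$ regularity.

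For the first ODE, I would start from $\lambda^{2}h(\lambda)=2\mathcal{K}(\lambda)$, differentiate once to get $2\lambda h(\lambda)+\lambda^{2}h'(\lambda)=2\mathcal{K}'(\lambda)$, and divide by $\lambda^{2}$ to recover \eqref{eq:ODE-first-order}; the initial condition $h(0)=\Var[X]$ is exactly \eqref{eq:h(0)=Var(X)}. For the second ODE, I would rewrite the first one as $\lambda h'(\lambda)+2h(\lambda)=2\mathcal{K}'(\lambda)/\lambda$, differentiate once more, and divide by $\lambda$, which yields \eqref{eq:ODE-second-order}. The value $h'(0)=\tfrac{1}{3}\E[(X-\mu)^{3}]$ comes from a Taylor expansion: since $\mathcal{K}(\lambda)=\tfrac{\Var[X]}{2}\lambda^{2}+\tfrac{\E[(X-\mu)^{3}]}{6}\lambda^{3}+O(\lambda^{4})$, one reads off
\[
h(\lambda)=\Var[X]+\tfrac{1}{3}\E[(X-\mu)^{3}]\,\lambda+O(\lambda^{2}).
\]

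For uniqueness, the key observation is that the first ODE, multiplied by $\lambda^{2}$, becomes $(\lambda^{2}h(\lambda))'=2\mathcal{K}'(\lambda)$. Any $\mathcal{C}^{2}$ solution therefore satisfies $\lambda^{2}h(\lambda)=2\mathcal{K}(\lambda)+C$, and continuity of $h$ at $0$ together with $\mathcal{K}(0)=0$ forces $C=0$, so that $h(\lambda)=2\mathcal{K}(\lambda)/\lambda^{2}$ is recovered. An analogous argument handles the second ODE: multiplying by $\lambda^{3}$ turns it into $(\lambda^{3}h'(\lambda))'=2\lambda\mathcal{K}''(\lambda)-2\mathcal{K}'(\lambda)$; one integration from $0$ (where $\lambda^{3}h'(\lambda)$ vanishes by the $\mathcal{C}^{2}$ hypothesis and the known $h'(0)$) determines $h'(\lambda)$, and a second integration using $h(0)=\Var[X]$ then determines $h$ uniquely.

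The main subtlety is that $\lambda=0$ is a singular point of both equations, since the coefficients $2/\lambda$ and $3/\lambda$ blow up there, so the classical Cauchy--Lipschitz theorem does not apply directly. This is precisely why the $\mathcal{C}^{2}$ assumption matters: it allows multiplication through by the appropriate power of $\lambda$ to produce an exact derivative, and the regularity then pins down the otherwise free integration constants via the prescribed initial data. Once this is done, the derivations of the ODEs and of the initial values are routine computations.
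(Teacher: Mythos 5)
Your proof is correct and follows the same route as the paper, whose entire proof is the remark that the result ``is directly obtained by differentiating $h$ and via standard analysis theorems'': your derivation of the two ODEs from $\lambda^{2}h(\lambda)=2\mathcal{K}(\lambda)$ and your reading-off of the initial values from the Taylor expansion of $\mathcal{K}$ are exactly that computation. Your uniqueness argument via the exact derivatives $(\lambda^{2}h(\lambda))'=2\mathcal{K}'(\lambda)$ and $(\lambda^{3}h'(\lambda))'=2\lambda\mathcal{K}''(\lambda)-2\mathcal{K}'(\lambda)$, which correctly addresses the singular point at $\lambda=0$ that defeats a naive Cauchy--Lipschitz argument, supplies precisely the detail the paper leaves implicit.
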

\begin{proof}
    The result is directly obtained by differentiating  $h$ and via standard analysis theorems. 
\end{proof}
\begin{remark}
For cases such as the Bernoulli and uniform distributions, we may  prove that the r.h.s. of~\eqref{eq:ODE-second-order} is strictly negative on $\mathbb{R}^*:=\R\setminus\{0\}$. This implies that if $\lambda_0$ is extremal (i.e.,  $h'(\lambda_0)=0$), then it satisfies $h''(\lambda_0)<0$ so that it is a local maximum. This implies that $h$ has no local minimum and thus may only have one critical point which is necessarily the unique global maximum. 
\end{remark}

We conclude this section with another possible methodology for  deriving a necessary and sufficient condition for $\sigma_\opt^2$. To this end, the problem needs to be addressed from a different point of view, by studying the difference of the terms of Definition~\ref{eq:def}:
\begin{equation}\label{eq:Delta}
\Delta\,:\, (\sigma^2,\lambda)\in\mathbb{R}_+^*\times \mathbb{R}\mapsto \exp\left(\frac{\lambda^2\sigma^2}{2}\right)-\E[\exp(\lambda[X-\mu])].
\end{equation}
\begin{proposition}[Characterization of $\sigma^2_\opt$, with respect to $\Delta$]\label{prop:sigma_opt-NSC}
If $\Delta$ is  $\mathcal{C}^1$, then the optimal proxy variance is characterized by:    
\begin{equation}\label{eq:th:sigma_opt-NSC}
\lambda\mapsto \Delta(\sigma^2_\opt,\lambda)\geq 0 \text{ and } \exists\, \lambda_0\in \mathbb{R}\text{, such that } \Delta(\sigma^2_\opt,\lambda_0)=0 \text{ and } \partial_\lambda\Delta(\sigma^2_\opt,\lambda_0)=0.
\end{equation}
\end{proposition}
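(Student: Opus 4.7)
The plan is to prove the two directions of the characterization separately, each reducing to translating Proposition~\ref{prop:general-proxy-variance} from the $h$-picture to the $\Delta$-picture and then invoking a first-order condition.

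\textbf{Necessity.} I start from the fact that $\sigma^2_\opt$ is itself a proxy variance, so \eqref{eq:def}, rewritten via \eqref{eq:Delta}, gives $\Delta(\sigma^2_\opt,\lambda)\geq 0$ for every $\lambda\in\mathbb{R}$. By Proposition~\ref{prop:general-proxy-variance} the maximum of $h$ is attained at some finite $\lambda_0$, and exponentiating the equality $\sigma^2_\opt=(2/\lambda_0^2)\mathcal{K}(\lambda_0)$ yields $\mathbb{E}[\exp(\lambda_0(X-\mu))]=\exp(\lambda_0^2\sigma^2_\opt/2)$, i.e.\ $\Delta(\sigma^2_\opt,\lambda_0)=0$. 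The map $\lambda\mapsto\Delta(\sigma^2_\opt,\lambda)$ is therefore non-negative on $\mathbb{R}$ with a zero at $\lambda_0$, so $\lambda_0$ is a global minimizer; the $\mathcal{C}^1$ hypothesis then forces the first-order condition $\partial_\lambda\Delta(\sigma^2_\opt,\lambda_0)=0$.

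\textbf{Sufficiency.} Assume $\sigma^2$ satisfies both conditions. The first is precisely \eqref{eq:def}, hence $\sigma^2\geq\sigma^2_\opt$. For the reverse inequality I would exploit the strict monotonicity of $s\mapsto\Delta(s,\lambda_0)=\exp(\lambda_0^2 s/2)-\mathbb{E}[\exp(\lambda_0(X-\mu))]$ in $s$, valid as soon as $\lambda_0\neq 0$ since $\partial_s\Delta(s,\lambda_0)=(\lambda_0^2/2)\exp(\lambda_0^2 s/2)>0$: if one had $\sigma^2_\opt<\sigma^2$, monotonicity would give $\Delta(\sigma^2_\opt,\lambda_0)<\Delta(\sigma^2,\lambda_0)=0$, contradicting that $\sigma^2_\opt$ is itself a proxy variance.

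The expected obstacle is the degenerate case $\lambda_0=0$: there $\Delta(\sigma^2,0)=0$ and $\partial_\lambda\Delta(\sigma^2,0)=-\mathbb{E}[X-\mu]=0$ for every admissible $\sigma^2$, so the tangency conditions are vacuous and, taken literally, sufficiency fails. I would resolve this by observing that $\lambda_0=0$ maximizes $h$ only in the strictly sub-Gaussian regime $\sigma^2_\opt=\Var[X]$; in that case the variance lower bound \eqref{eq:h(0)=Var(X)} combined with the non-negativity of $\Delta(\sigma^2,\cdot)$ already pins $\sigma^2=\Var[X]=\sigma^2_\opt$. The substantive content of the proposition is therefore the case $\lambda_0\neq 0$, where the tangency conditions provide a genuine necessary-and-sufficient identification of $\sigma^2_\opt$.
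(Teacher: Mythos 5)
Your proof is correct in substance and, for the sufficiency direction, is essentially the paper's own argument: both rest on the strict monotonicity of $s\mapsto\Delta(s,\lambda_0)$ at a fixed $\lambda_0\neq 0$ to force $\sigma^2\leq\sigma^2_\opt$, the inequality $\sigma^2\geq\sigma^2_\opt$ coming from the non-negativity of $\Delta(\sigma^2,\cdot)$, i.e.\ from~\eqref{eq:def}. For necessity you take a slightly different and arguably cleaner route: you produce the zero $\lambda_0$ directly from the attained maximum of $h$ (Proposition~\ref{prop:general-proxy-variance} together with Lemma~\ref{lem:sup-equals-max}), whereas the paper argues by contradiction through the joint continuity of $\Delta$ in $(\sigma^2,\lambda)$ (Lemma~\ref{lem:variations-of-Delta}, point 2) --- an argument that implicitly needs a compactness step to prevent the infimum of $\Delta(\sigma^2_\opt,\cdot)$ from escaping to $|\lambda|\to\infty$; your route avoids this entirely. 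Your observation that the statement degenerates at $\lambda_0=0$ is accurate and is indeed glossed over by the paper, which quietly restricts the monotonicity in $\sigma^2$ to $\lambda\in\mathbb{R}^*$. However, your proposed repair of that case does not work: non-negativity of $\Delta(\sigma^2,\cdot)$ only yields $\sigma^2\geq\sigma^2_\opt$, so nothing ``pins'' $\sigma^2=\Var[X]$; in fact \emph{any} $\sigma^2>\sigma^2_\opt$ satisfies all three displayed conditions with $\lambda_0=0$, whether or not $X$ is strictly sub-Gaussian. The honest fix is the one you gesture at in your final sentence: the characterization must be read with $\lambda_0\in\mathbb{R}^*$ (or, equivalently, with the tangency condition imposed on $\lambda\mapsto 2\Delta(\sigma^2,\lambda)/\lambda^2$ extended by continuity to $\lambda=0$, whose value there is $\sigma^2-\Var[X]$, so that a zero at the origin does force $\sigma^2=\Var[X]=\sigma^2_\opt$). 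With that reading, your argument is complete and matches the paper's.
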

\begin{proof}
	See Section~\ref{sec:proof-Delta}, in Appendix.
\end{proof}
This proof technique was used by \cite{marchal2017sub} for obtaining the optimal proxy variance of the beta and Dirichlet distributions. 
However we find more convenient to use the conditions stated in Proposition~\ref{prop:ODEs} using the function $h$ to address the issues presented in this article, except for the triangular distribution in Section~\ref{sec:triangular} where this  method is employed for a numerical evaluation of $\sigma_\opt^2$. 
\begin{remark}
In general, we would like to remove the condition: $\lambda\mapsto \Delta(\sigma^2,\lambda)\geq 0$ on the r.h.s. of Proposition~\ref{prop:sigma_opt-NSC}, in order to have a simpler (and local) characterization of the optimal proxy variance, as a solution of \eqref{eq:th:sigma_opt-NSC}. However, this is not possible, since we may not exclude that there exists a value $\sigma^2<\sigma^2_\opt$ for which $\Delta(\sigma^2,\lambda)$ presents a double zero $\lambda_0$ where locally it remains non-negative but at the same time a whole interval far from $\lambda_0$ where it would be strictly negative. 
\end{remark}

%%%%%%%%%%%%%%%%%%%%%%%%%%%%%%%%%%%%%%%%%%%%%%%%
\section{On strict sub-Gaussianity}\label{sec:strict}
%%%%%%%%%%%%%%%%%%%%%%%%%%%%%%%%%%%%%%%%%%%%%%%%

\subsection{Conditions based on the cumulants}

Strict sub-Gaussianity is fulfilled when the optimal proxy variance equals the variance. In view of Equation~\eqref{eq:h(0)=Var(X)}, Proposition~\ref{prop:general-proxy-variance} can be rewritten as the following corollary in order to characterize the strict sub-Gaussianity property.
\begin{corollary}[Corollary of Proposition~\ref{prop:general-proxy-variance}]\label{cor:NSC-strict-h-max-0}
    A distribution is strictly sub-Gaussian if and only if the maximum of function $h$, defined in~\eqref{eq:h-def}, is attained in zero (and is automatically equal to $\Var[X]$). That is: 
\begin{equation}
 \max_{\lambda\in \mathbb{R}} \text{ } h(\lambda)=h(0)=\Var[X].
\end{equation}
\end{corollary}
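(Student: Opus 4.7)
The statement to prove is essentially a bookkeeping rearrangement of two facts already established in the excerpt, so the plan is short.

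The plan is to chain Proposition~\ref{prop:general-proxy-variance} with Equation~\eqref{eq:h(0)=Var(X)}. First I would recall the definition: $X$ is strictly sub-Gaussian if and only if $\sigma_\opt^2 = \Var[X]$. Next, I would use the continuous extension of $h$ at the origin given by~\eqref{eq:h(0)=Var(X)}, which identifies $h(0) = \Var[X]$, so that $\Var[X]$ is always a value taken by $h$ on $\R$ (with $h(0)$ interpreted as this limit, the quantity $\sup h$ and $\max h$ coincide thanks to Lemma~\ref{lem:sup-equals-max}).

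For the forward implication, assume $X$ is strictly sub-Gaussian. Then by Proposition~\ref{prop:general-proxy-variance},
\begin{equation*}
\max_{\lambda \in \R} h(\lambda) = \sigma_\opt^2 = \Var[X] = h(0),
\end{equation*}
so the maximum is attained at $\lambda = 0$. Conversely, if $\max_{\lambda \in \R} h(\lambda) = h(0)$, then applying Proposition~\ref{prop:general-proxy-variance} again together with~\eqref{eq:h(0)=Var(X)} gives $\sigma_\opt^2 = h(0) = \Var[X]$, which is exactly strict sub-Gaussianity.

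There is no real obstacle: the whole work was already done in Proposition~\ref{prop:general-proxy-variance} (which identifies $\sigma_\opt^2$ with the maximum of $h$) and in~\eqref{eq:h(0)=Var(X)} (which identifies $h(0)$ with $\Var[X]$). The only subtlety worth flagging is that $h$ is a priori defined only on $\R \setminus \{0\}$, so one should implicitly extend $h$ by continuity at $0$ via the Taylor expansion of $\mathcal{K}(\lambda) = \tfrac{1}{2}\Var[X]\lambda^2 + o(\lambda^2)$ as $\lambda \to 0$; once this is done, the equivalence is a direct rewriting.
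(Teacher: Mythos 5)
Your proof is correct and follows exactly the route the paper intends: the corollary is stated as an immediate consequence of Proposition~\ref{prop:general-proxy-variance} combined with the identification $h(0)=\Var[X]$ from~\eqref{eq:h(0)=Var(X)}, which is precisely your chaining argument. The remark about extending $h$ continuously at $0$ is a sensible clarification but matches what the paper already does implicitly.
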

This characterization provides necessary conditions, based on cumulants, that are required for strict sub-Gaussianity to hold. 
\begin{proposition}[Necessary conditions based on cumulants]\label{prop:skewness}
If $X$ is strictly  sub-Gaussian, then the $3^{\text{rd}}$ and $4^{\text{th}}$ cumulants of $X$ must satisfy
\begin{align}
    \kappa_3 &= \E[(X-\E[X])^3]=0\text{, and }\label{eq:kappa_3}\\
    \kappa_4 &= \E[(X-\E[X])^4]-3\Var[X]^2\leq0.\label{eq:kappa_4}
\end{align}
\end{proposition}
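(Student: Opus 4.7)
The plan is to use Corollary~\ref{cor:NSC-strict-h-max-0}, which reformulates strict sub-Gaussianity as the statement that $h$ attains its global maximum at $\lambda = 0$. Reading off the Taylor coefficients of $h$ at the origin then converts the first- and second-order necessary conditions for a local maximum into the announced conditions on the cumulants $\kappa_3$ and $\kappa_4$.

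Concretely, first I would note that since $X$ is almost surely bounded, the moment generating function of $X-\mu$ is entire, so $\mathcal{K}$ is real-analytic on all of $\R$ with convergent cumulant expansion
\begin{equation*}
\mathcal{K}(\lambda) = \sum_{n \geq 2} \frac{\kappa_n}{n!}\, \lambda^n,
\end{equation*}
where $\kappa_1 = 0$ by centering, $\kappa_2 = \Var[X]$, and $\kappa_3$, $\kappa_4$ are defined as in the statement. Dividing by $\lambda^2/2$ removes the apparent singularity and yields
\begin{equation*}
h(\lambda) = \Var[X] + \frac{\kappa_3}{3}\,\lambda + \frac{\kappa_4}{12}\,\lambda^2 + O(\lambda^3),
\end{equation*}
which in particular shows that $h$ is $\mathcal{C}^2$ (actually analytic) at the origin, with $h(0)=\Var[X]$, $h'(0)=\kappa_3/3$, and $h''(0)=\kappa_4/6$.

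Then, assuming $X$ is strictly sub-Gaussian, Corollary~\ref{cor:NSC-strict-h-max-0} ensures that $\lambda = 0$ is a global, hence local, maximum of $h$. The standard first-order necessary condition $h'(0) = 0$ immediately yields $\kappa_3 = 0$, which is \eqref{eq:kappa_3}. The second-order necessary condition $h''(0) \leq 0$ gives $\kappa_4 / 6 \leq 0$, i.e. $\kappa_4 \leq 0$, which is \eqref{eq:kappa_4}.

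No real obstacle arises: the argument is essentially a Taylor expansion combined with the reformulation of strict sub-Gaussianity from Corollary~\ref{cor:NSC-strict-h-max-0}. The only subtlety worth flagging is the regularity of $h$ at the origin needed to invoke the second-order test, which is automatic for bounded $X$ thanks to the analyticity of $\mathcal{K}$ and the explicit cancellation of the $\lambda^{-2}$ factor against the vanishing of $\mathcal{K}$ to order two at zero.
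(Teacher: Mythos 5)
Your proof is correct and follows essentially the same route as the paper: expand $\mathcal{K}$ in cumulants, divide by $\lambda^2/2$ to obtain the Taylor expansion of $h$ at the origin, and apply the first- and second-order necessary conditions for $\lambda=0$ to be a (local, hence global) maximum via Corollary~\ref{cor:NSC-strict-h-max-0}. Your explicit remark on the analyticity of $\mathcal{K}$ for bounded $X$ is a welcome added precision, but the argument is the same.
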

\begin{proof}
By definition of the cumulant generating function $\mathcal{K}(\lambda)$ of $X-\mu$, 
\begin{equation}\label{eq:cumulant-gen-fun-def}
	\mathcal{K}(\lambda)=\sum_{i=1}^\infty \kappa_i\frac{\lambda^i}{i!},
\end{equation}
where $\kappa_i$ are the cumulants of $X-\mu$. Since $\kappa_1=\mu-\mu=0$ and $\kappa_2=\Var[X]$, and using values for the third and fourth cumulants given in~\eqref{eq:kappa_3} and~\eqref{eq:kappa_4}, we may write (locally around $\lambda\to 0$):
\begin{equation}\label{eq:h-taylor}
    h(\lambda)=\Var[X]+ \E[(X-\mu)^3]\frac{\lambda}{3}+\left(\E[(X-\mu)^4]-3\Var[X]^2\right)\frac{\lambda^2}{12}+O(\lambda^3).
\end{equation}

Therefore if $\E[(X-\mu)^3]\neq 0$, the maximum of $h(\lambda)$ cannot be $h(0)$ and thus strict sub-Gaussianity cannot be achieved. We conclude the proof by noting that if $\E[(X-\mu)^3]= 0$, we have the fact that $\lambda=0$ can be a local maximum, only if $\E[(X-\mu)^4]\leq 3\Var[X]^2$. 
\end{proof}
Condition~\eqref{eq:kappa_3} requires that the third centered moment is zero and Condition~\eqref{eq:kappa_4} imposes a relation between the second and fourth centered moments. Note that the latter condition  can be compactly formulated via an alternative condition on the kurtosis of $X$:
\begin{align*}
    \text{Kurt}[X] = \frac{\E[(X-\E[X])^4]}{\E[(X-\E[X])^2]^2}\leq 3.
\end{align*}
More specifically, sub-Gaussianity requires that the random variable has kurtosis less than or equal to three, which is the kurtosis of a standard Gaussian random variable. Such distributions are referred to as \textit{platycurtic}. The fourth cumulant defined in~\eqref{eq:kappa_4} is also termed \textit{excess kurtosis}. Thus, strict sub-Gaussianity requires negative excess kurtosis.

When the above necessary conditions~\eqref{eq:kappa_3} and~\eqref{eq:kappa_4} hold, we are not able to obtain simple additional necessary conditions on the next cumulants. In particular, note that strict sub-Gaussianity \textit{does not} imply symmetry (i.e.,  $\E[(X-\E[X])^{2j +1}]=0$, for any $j \geq 0$), as will be discussed in the next section. 

In contrast, more can be said when the distribution is symmetric. In fact, in the symmetric case, the moments of odd order are zero, and a simple sufficient condition can be readily obtained by comparing the Taylor expansions at $\lambda=0$ of both terms of inequality~\eqref{eq:def}, as stated in the following proposition. 

\begin{proposition}[Sufficient condition based on moments]\label{prop:symmetric-moments}
If $X$ is symmetric with respect to its mean $\mu=\E[X]$, then a sufficient condition for $X$ to be strictly sub-Gaussian can be stated in terms of all its even moments. That is, for $X$ to be strictly sub-Gaussian, it is sufficient that
\begin{align}\label{eq:sufficient-symmetric-moments}
    \forall j\geq 2,\quad 
    \frac{\E[(X-\mu)^{2j}]}{(2j)!} \leq \frac{(\Var[X])^j}{2^{j}j!}
\end{align}
holds.
\end{proposition}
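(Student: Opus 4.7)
The plan is to verify the sub-Gaussian inequality~\eqref{eq:def} with proxy variance $\sigma^{2}=\Var[X]$ by a direct termwise comparison of two power series in $\lambda$. Since the standing setting of the paper is almost surely bounded random variables, the moment generating function of $X-\mu$ is entire, so one may write
\begin{equation*}
\E[\exp(\lambda(X-\mu))] \;=\; \sum_{k=0}^{\infty}\frac{\lambda^{k}\,\E[(X-\mu)^{k}]}{k!},
\end{equation*}
and symmetry of $X$ about $\mu$ kills every odd central moment, leaving only the even powers of $\lambda$ on the right-hand side.

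Next I would expand the target upper bound as
$\exp(\lambda^{2}\Var[X]/2)=\sum_{j\geq 0}\lambda^{2j}\Var[X]^{j}/(2^{j} j!)$
and compare the two series coefficient by coefficient. The $j=0$ coefficients are both $1$, the $j=1$ coefficients both equal $\Var[X]/2$, and hypothesis~\eqref{eq:sufficient-symmetric-moments} is precisely the inequality needed to control the coefficients for all $j\geq 2$. Since $\lambda^{2j}\geq 0$ for every real $\lambda$, summing the non-negative coefficientwise differences lifts the bound to the pointwise inequality
\begin{equation*}
\E[\exp(\lambda(X-\mu))] \;\leq\; \exp\!\left(\frac{\lambda^{2}\Var[X]}{2}\right), \qquad \forall\,\lambda\in\R.
\end{equation*}

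This shows that $\Var[X]$ is a valid sub-Gaussian proxy variance, and combined with the universal lower bound $\sigma_{\opt}^{2}\geq \Var[X]$ coming from~\eqref{eq:h(0)=Var(X)}, we conclude $\sigma_{\opt}^{2}=\Var[X]$, i.e., strict sub-Gaussianity. I do not anticipate any real obstacle: the only technicality is justifying the termwise manipulation of the two series, which is immediate from the absolute convergence of both on all of $\R$, guaranteed by boundedness of $X$. The heart of the argument is structural rather than analytic: symmetry collapses the MGF expansion of $X-\mu$ onto its even-indexed part, which matches the form of the Gaussian Taylor series, and~\eqref{eq:sufficient-symmetric-moments} is then \emph{exactly} the coefficientwise domination required for the sub-Gaussian bound with proxy variance equal to the variance.
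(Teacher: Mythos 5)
Your proof is correct and follows essentially the same route as the paper's: expand both sides of~\eqref{eq:def} with $\sigma^2=\Var[X]$ as power series in $\lambda$, use symmetry to discard the odd central moments, and observe that~\eqref{eq:sufficient-symmetric-moments} is exactly the term-by-term domination needed (with $j=0,1$ holding trivially). Your added remarks on absolute convergence and on combining with $\sigma_{\opt}^2\geq\Var[X]$ are sound and only make explicit what the paper leaves implicit.
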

\begin{proof}
The proof is based on series expansions at $\lambda=0$ of both terms of inequality~\eqref{eq:def}, when the proxy variance $\sigma^2$ is set to the variance $\Var[X]$. Namely:
\begin{align}\label{eq:AlphaAlphaCase}
\E[\exp(\lambda X)] = \sum_{j=0}^{\infty} \E\left[X^{2j}\right]\frac{\lambda^{2j}}{(2j)!},\quad \text{and} \quad 
\exp\left(\frac{\lambda^2\Var[X]}{2}\right)=\sum_{j=0}^{\infty} \frac{(\Var[X])^j}{2^{j}}\frac{\lambda^{2j}}{j!},
\end{align} 
when compared term-by-term, leads to inequality~\eqref{eq:def}, under assumption~\eqref{eq:sufficient-symmetric-moments}. 
Note that inequality~\eqref{eq:sufficient-symmetric-moments} needs be checked only for $j\geq 2$, as it trivially holds for $j=0,1$.
\end{proof}

This technique was used by \citet{marchal2017sub} (Section 2.2)  for showing  that a (symmetric) $\Beta(\alpha,\alpha)$ random variable is strictly sub-Gaussian. We also use it to address the cases of Bernoulli and binomial, and triangular distributions in Section~\ref{sec:illustrations}.

\subsection{Link with symmetry}\label{sec:strict-symmetry}

The relationship between strict sub-Gaussianity and symmetry was discussed in the Introduction. Here, we provide a proof of Proposition~\ref{prop:symmetry-not-NSC}, while the proof of Proposition~\ref{prop:symmetry-strict} is deferred to Section~\ref{sec:illustrations}.

\subsubsection{Symmetry is neither a sufficient condition\ldots\label{sec:sym-not-strict}}
Simple symmetric distributions which break the necessary condition of negative excess kurtosis can easily be constructed by hand. One such construction is by means of mixture of Dirac masses. First, consider the discrete random variable 
\begin{equation}\label{eq:sym-not-strict}
	X\sim \frac{\eta}{2} (\delta_{-1}+\delta_{1})+(1-\eta)\delta_{0},
\end{equation}
which is a three-component mixture of Dirac masses at locations $-1$, $0$ and $1$, with $\eta \in [0,1]$. It is symmetric, by construction, and its excess kurtosis equals
\begin{equation}
	\kappa_4 = \E[X^4]-3\Var[X]^2 = \eta - 3\eta^2 = \eta(1-3\eta),
\end{equation}
which is \textit{strictly positive} for all values $\eta \in\left(0, \frac{1}{3}\right)$, hence $X$ is not strictly sub-Gaussian for these values by virtue of Proposition~\ref{prop:skewness}. On the other hand when $\eta \to 1$, the distribution of $X$ degenerates to that of the so-called Rademacher random variable, which leads to the least possible excess kurtosis of $-2$.

Similar counter-examples to the sufficientness of symmetry can be built in the form of mixtures of two symmetric beta variables:
\begin{align*}
    X\sim\eta \Beta(\alpha,\alpha)+(1-\eta)\Beta(\beta,\beta),
\end{align*}
for $\eta \in (0,1)$ and $\alpha,\beta>0$. For any value of  $\eta \in (0,1)$, values for  $\alpha,\beta$ leading to positive excess kurtosis can be obtained. For instance, we may set $(\eta,\alpha,\beta)=(0.1,1.5,9)$, to obtain the excess kurtosis $\kappa_4 \approx 1.1\times 10^{-4}$.

\subsubsection{\ldots nor a necessary condition for strict sub-Gaussianity\label{sec:asym-strict}}
Although most typical bounded random variables that are strictly sub-Gaussian are symmetric (see, e.g., Proposition~\ref{prop:symmetry-strict}), the symmetry of the distributions of such variables is not a necessary condition for strict sub-Gaussianity. Examples of such distributions include mixtures of Dirac masses.  For example,
\begin{align}\label{eq:not_necessary}
    X\sim \sum_{i=1}^3 p_i \delta_{x_i} \text{ with } \sum_{i=1}^3 p_i=1
\end{align}
with $(x_1,x_2,x_3)=\left(-2,-\frac{1}{2},\frac{5}{4}\right)$ and $(p_1,p_2,p_3)=\left(\frac{1}{13},\frac{4}{7},\frac{32}{91}\right)$. The function $h$ for the random variable characterized by \eqref{eq:not_necessary} is plotted in Figure~\ref{fig:not_necessary}. Note that it attains its maximum in $\lambda=0$.

\begin{figure}[!ht]
    \centering
    \begin{subfigure}{0.49\textwidth}
            \centering
            \includegraphics[width=\textwidth]{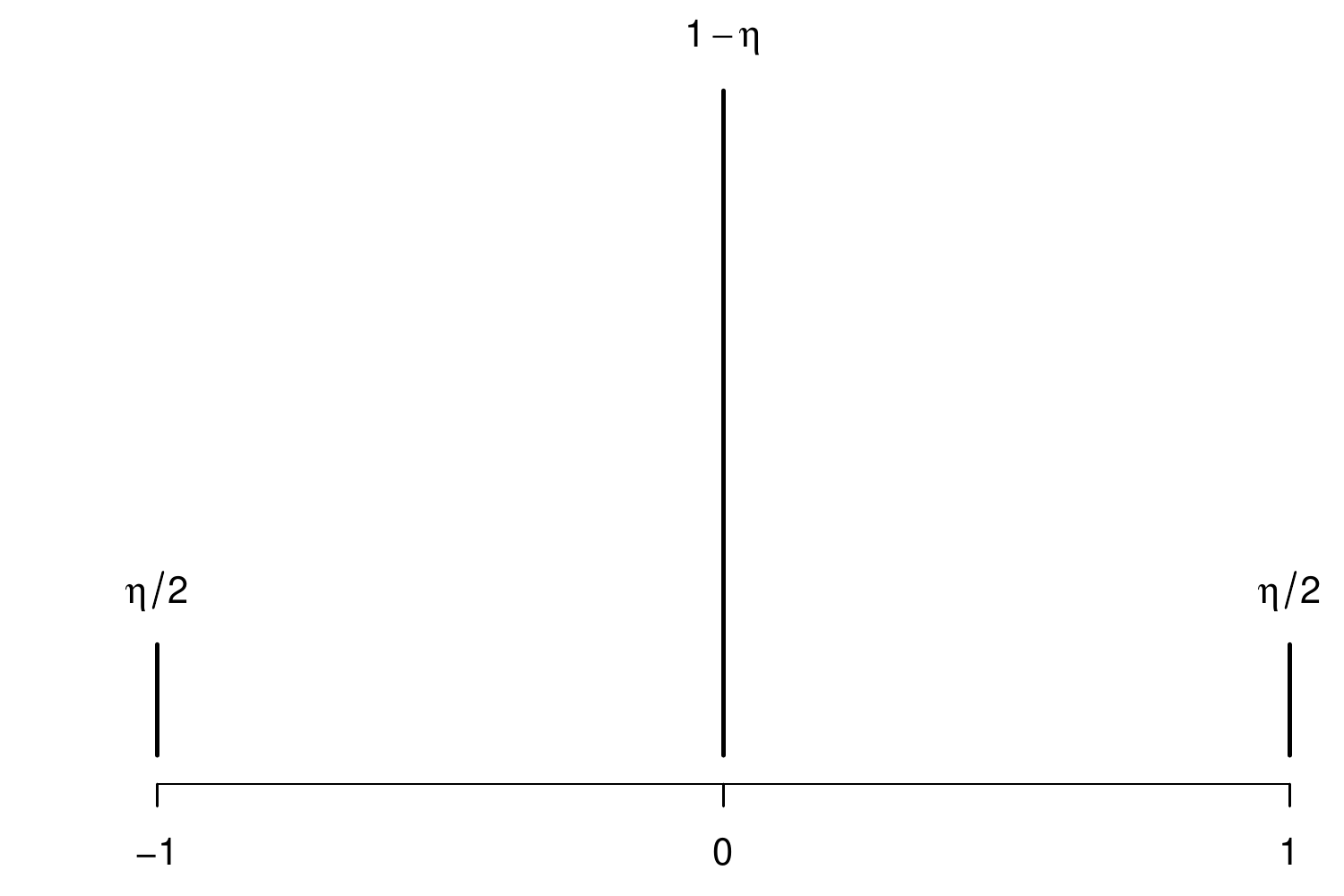}
    \subcaption{Distribution~\eqref{eq:sym-not-strict}: symmetric but not strictly sub-Gaussian when $\eta\in\left(0,\frac{1}{3}\right)$}
    \label{fig:dirac-sym-not-strict}
    \end{subfigure}
    \begin{subfigure}{0.49\textwidth}
            \centering
            \includegraphics[width=\textwidth]{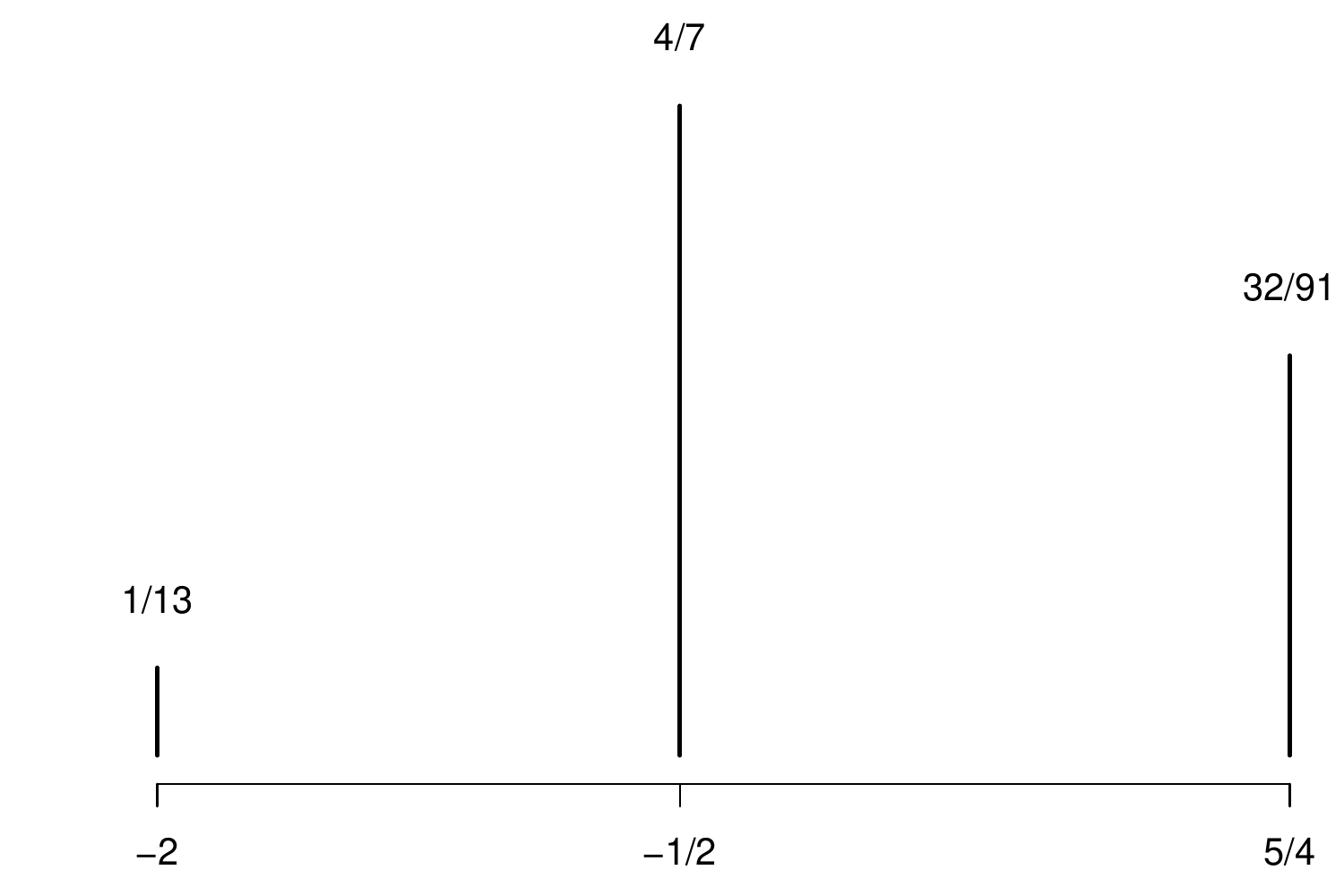}
    \subcaption{Distribution~\eqref{eq:not_necessary}: asymmetric but  strictly sub-Gaussian}
    \label{fig:not_necessary}
    \end{subfigure}\\
    \begin{subfigure}{0.49\textwidth}
            \centering
            \includegraphics[width=\textwidth]{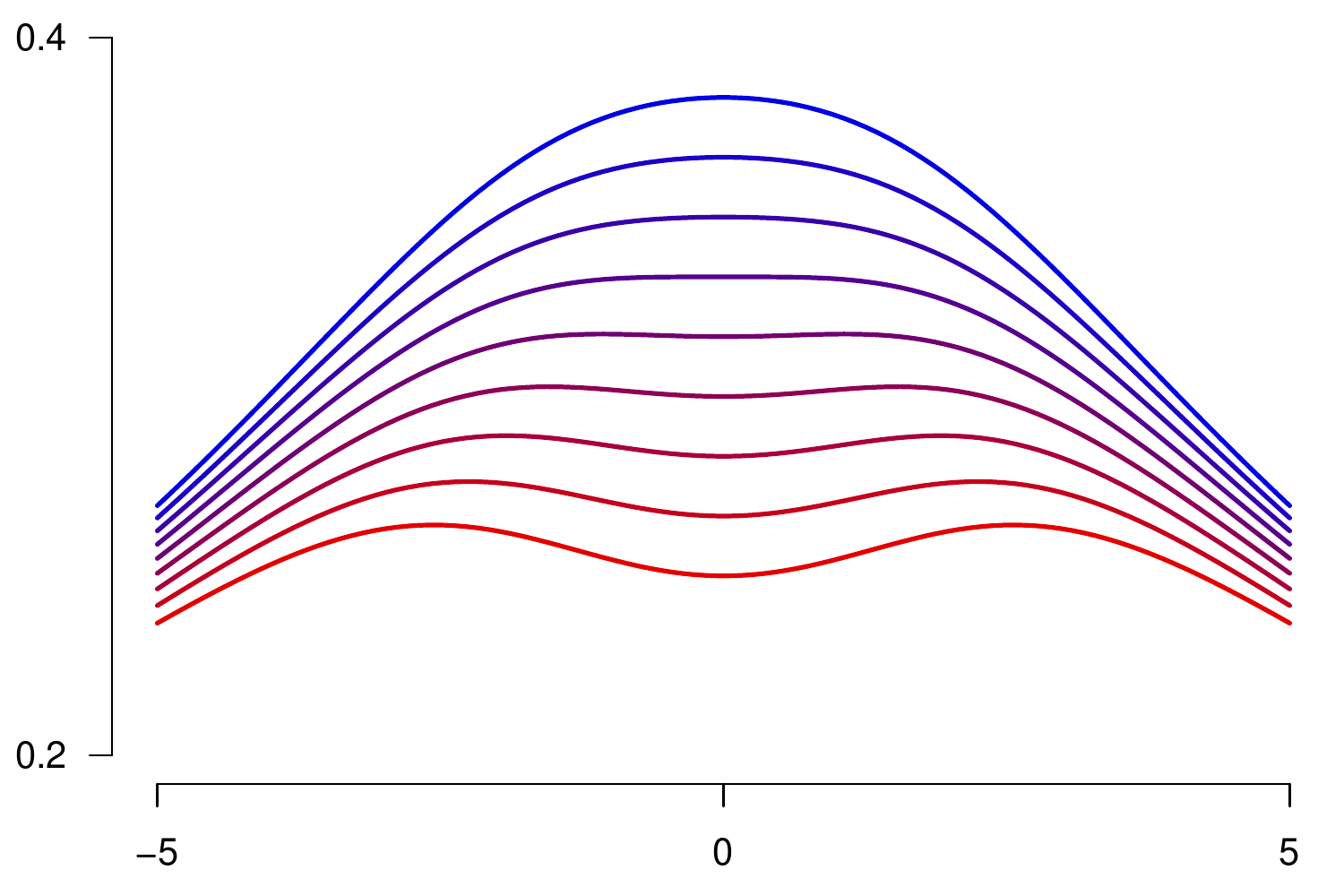}
    \subcaption{Function $h$ for~\eqref{eq:sym-not-strict} with $\eta\in[0.25,0.4]$}
    \label{fig:h-sym-not-strict}
    \end{subfigure}
    \begin{subfigure}{0.49\textwidth}
            \centering
            \includegraphics[width=\textwidth]{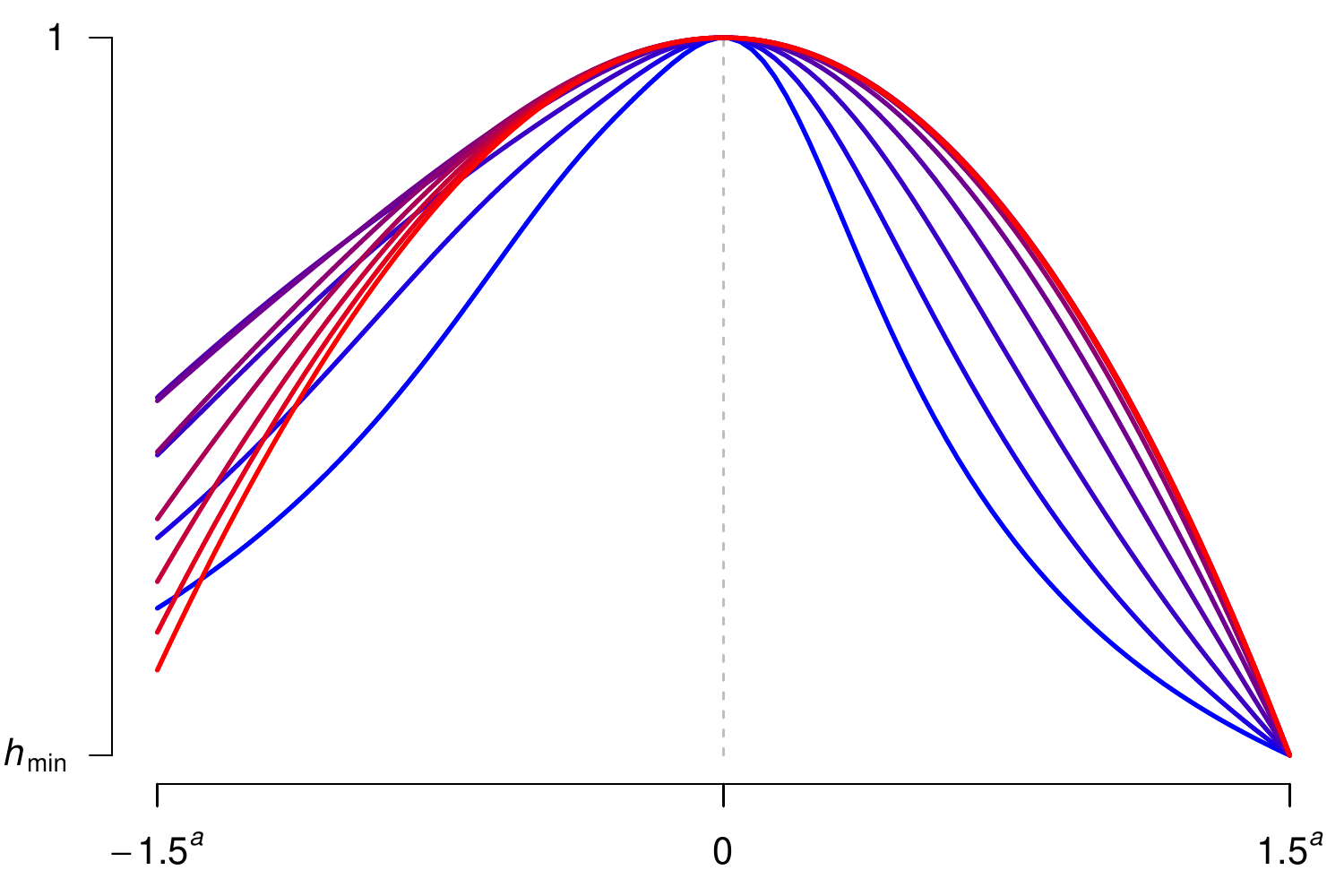}
    \subcaption{Function $h$ for~\eqref{eq:not_necessary}, zoomed-in for red curves}
    \label{fig:h-asym-strict}
    \end{subfigure}
    \caption{Illustration of the mixtures of Dirac masses, proving Proposition~\ref{prop:symmetry-not-NSC} described in Sections~\ref{sec:sym-not-strict} (a,c) and~\ref{sec:asym-strict} (b,d). In (c), $\eta$ varies from $0.25$ (red curve, maximum not at zero) to $0.4$ (blue curve, maximum at zero). In (d), we illustrate the function $h$ with different zooming scales (around $\lambda=0$): from $[-1.5^5,1.5^5]$ (blue curve, maximum zoom-out) to $[-1.5^{-5},1.5^{-5}]$ (red curve, maximum zoom-in), with an adapted $y$-scale, showing that the maximum is attained in zero.}
\end{figure}

\pagebreak

\section{Results and applications to standard distributions\label{sec:illustrations}}

%%%%%%%%%%%%%%%%%%%%%%%%%%%%%%%%%%%%%%%%%%%%%%%%%%%%%%
\subsection{Bernoulli and binomial distributions\label{sec:bernoulli}}
%%%%%%%%%%%%%%%%%%%%%%%%%%%%%%%%%%%%%%%%%%%%%%%%%%%%%%

Consider a Bernoulli random variable, $X\sim \Ber(\mu)$ with $\mu\in(0,1)$ and a binomial random variable, $Y\sim \Bin(n,\mu)$ which can be obtained as the sum of $n$ independent  $\Ber(\mu)$
random variables, $n$ a positive integer. 

\begin{proof}[Proof of Proposition~\ref{prop:symmetry-strict} for the Bernoulli and binomial distributions]$ $\newline
Starting with the Bernoulli: the third cumulant is equal to
$$\E[(X-\E[X])^3]=2\mu(1-\mu)\left(\frac{1}{2}-\mu\right),$$
thus, by virtue of Proposition~\ref{prop:skewness}, a non-degenerate Bernoulli random variable may only be strictly sub-Gaussian when $\mu=\frac{1}{2}$. That is, when it is symmetric.

Conversely,  verifying the sufficient condition for
the symmetric Bernoulli distribution $\Ber\left(1/2\right)$ is equivalent to assessing the condition for the Rademacher distribution instead. That is, the distribution of random
variable $X$, where the events $X=-1$ and $X=1$ have equal probability
\[
\mathbb{P}\left[X=-1\right]=\mathbb{P}\left[X=1\right]=\frac{1}{2}\text{.}
\]
Since $X^2=1$, the variance of $X$ and all of its even moments are $\Var[X]=\E[X^{2j}]=1$. 
Therefore, to verify the sufficient condition  of Proposition~\ref{prop:symmetric-moments}, we are required
to demonstrate that 
\[
\left(2j\right)!\ge2^{j}j!,
\]
for each $j\ge2$, which follows from the expansion
\[
\left(2j\right)!=\underset{j}{\underbrace{2j\times\dots\times\left(j+1\right)}}\times j! \ge2^{j}j!.
\]
Thus, we have verification of the sufficient
condition for the Rademacher distribution and hence the symmetric
Bernoulli distribution, as a consequence.

Turning to the binomial distribution, we observe that the optimal proxy variance of a sum of i.i.d. (independent and identically distributed) variables is the sum of the optimal proxy variances. Thus, we immediately obtain the result that 
\begin{equation}
    \sigma_\opt^2[\Bin(n,\mu )]=n\sigma_\opt^2[\Ber(\mu )].
\end{equation}
In particular, $X\sim \Bin(n,\mu )$ is strictly sub-Gaussian if and only if $\mu =\frac{1}{2}$.
\end{proof}

We now turn to the optimal proxy variance of a Bernoulli, which has the form
\begin{equation}
	\sigma_\opt^2 = \frac{\frac{1}{2}-\mu }{\ln\left(\frac{1}{\mu} -1\right)}. 
\end{equation}
This fact is known via Theorem 2.1 and Theorem 3.1 of \cite{buldygin2013binary}; see also the discussion in the introduction of \cite{marchal2017sub}. 
Here, we focus on a rather different approach, based on function $h$ and Corollary~\ref{cor:nece-cond-h-maximum}, where
\begin{equation}
h(\lambda)=\frac{2}{\lambda^2}\left(\ln[\mu  \edr^\lambda+(1-\mu )]- \mu \lambda\right).
\end{equation}
Note that the study of the variations of $h$ is observed by \cite{berend2013concentration} (cf. their function $g$; Equation (2.1)). However, a formal proof that $h$ has a single global maximum is left ``as an intriguing open problem'' by \citeauthor{berend2013concentration}. This is stated in the next proposition, and formally proved, below. An illustration of this result is presented in Figure~\ref{fig:bernoulli}.

\begin{proposition}\label{prop:bernoulli}
	If $X\sim \Ber(\mu )$, then the function 
\begin{equation*}
    h: \lambda \mapsto  \frac{2}{\lambda^2}\left(\ln(\mu  \edr^\lambda+1-\mu)- \mu \lambda\right)
\end{equation*} admits a unique critical point which is a global maximum. The global maximizer is obtained at $\lambda_0=2\ln\frac{1-\mu}{\mu}$, which leads to the optimal proxy variance of form
$$\sigma_{\opt}^2=h(\lambda_0)=\frac{\frac{1}{2}-\mu }{\ln\left(\frac{1}{\mu} -1\right)}.$$ 
\end{proposition}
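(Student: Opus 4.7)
The plan is in two parts: a direct verification that $\lambda_0 = 2\ln\frac{1-\mu}{\mu}$ is a critical point of $h$ with value $(1/2-\mu)/\ln(1/\mu - 1)$, followed by a sign analysis showing that $h$ has no other critical points.

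For the first part, let $p(\lambda) := \mu \edr^\lambda/(\mu \edr^\lambda+1-\mu)$, so $\mathcal{K}'(\lambda) = p(\lambda) - \mu$. The key calculation is $\mu \edr^{\lambda_0}+1-\mu = (1-\mu)/\mu$, which yields $p(\lambda_0) = 1-\mu$. Both sides of the necessary condition $\lambda_0\mathcal{K}'(\lambda_0) = 2\mathcal{K}(\lambda_0)$ from Corollary~\ref{cor:nece-cond-h-maximum} then evaluate to $2(1-2\mu)\ln\frac{1-\mu}{\mu}$, so $\lambda_0$ is a critical point, and dividing by $\lambda_0^2$ produces the claimed value of $h(\lambda_0)$.

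For uniqueness, the plan is to introduce
\[
    \psi(\lambda) := \lambda\mathcal{K}'(\lambda) - 2\mathcal{K}(\lambda),
\]
so that $h'(\lambda) = 2\psi(\lambda)/\lambda^3$ for $\lambda \neq 0$ and critical points of $h$ correspond exactly to zeros of $\psi$ in $\R^*$. Direct differentiation gives $\psi(0) = \psi'(0) = 0$ together with the compact identity $\psi''(\lambda) = \lambda\mathcal{K}'''(\lambda)$. For the Bernoulli, $\mathcal{K}'''(\lambda) = p(\lambda)(1-p(\lambda))(1-2p(\lambda))$, which changes sign only at $\lambda^* := \ln\frac{1-\mu}{\mu}$, where $p = 1/2$. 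By the symmetry $\mu \mapsto 1-\mu$ (equivalent to $\lambda \mapsto -\lambda$), I may assume $\mu < 1/2$, so $\lambda^* > 0$ and $\lambda_0 = 2\lambda^*$; the case $\mu = 1/2$ is the Rademacher case already handled. The sign of $\psi''$ is then $-$ on $(-\infty,0)$, $+$ on $(0,\lambda^*)$, and $-$ on $(\lambda^*,\infty)$.

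From $\psi'(0) = 0$ and this sign pattern, $\psi'$ is strictly positive on $(-\infty,0)$, strictly positive on $(0,\lambda^*]$, then strictly decreasing on $(\lambda^*,\infty)$ down to the limit $-(1-\mu) < 0$ (since $\lambda p(1-p) \to 0$ exponentially fast while $p-\mu \to 1-\mu$), hence $\psi'$ has a unique zero $\lambda_1 \in (\lambda^*,\infty)$. Integrating $\psi$ similarly: $\psi < 0$ on $(-\infty,0)$, $\psi > 0$ on $(0,\lambda_1]$, and $\psi$ strictly decreases on $(\lambda_1,\infty)$ with $\psi(+\infty) = -\infty$ (using $\mathcal{K}(\lambda) \sim (1-\mu)\lambda$ and $\lambda(p-\mu) \sim (1-\mu)\lambda$, giving $\psi(\lambda) \sim -(1-\mu)\lambda$). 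Thus $\psi$ crosses zero exactly once on $\R^*$, which must be the $\lambda_0$ already identified in the first part, and the corresponding sign of $h'(\lambda) = 2\psi(\lambda)/\lambda^3$ forces $h$ to be strictly increasing on $(-\infty,\lambda_0)$ and strictly decreasing on $(\lambda_0,\infty)$.

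The main obstacle is that the single-ODE shortcut from the remark after Proposition~\ref{prop:ODEs} does not apply to an asymmetric Bernoulli: the right-hand side $\frac{2}{\lambda}(\mathcal{K}'(\lambda)/\lambda)'$ fails to be of one sign on $\R^*$ as soon as $\mu \neq 1/2$ (it blows up with the wrong sign near $0$), so one cannot just conclude ``every critical point is a local maximum'' from a single pointwise inequality. The identity $\psi''(\lambda) = \lambda\mathcal{K}'''(\lambda)$ is what unlocks the proof, because it reduces the global unimodality question to a single sign change of $1-2p$ at $\lambda^*$; the remaining four-interval bookkeeping of the signs of $\psi'$ and $\psi$ is then routine calculus.
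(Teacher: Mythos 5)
Your proof is correct, and it takes a genuinely different route from the paper's. More importantly, your suspicion about the paper's method is well founded: the argument you replace is actually flawed as written. The paper tries to show that the right-hand side of the second-order ODE, written as $\frac{2\mu(1-\mu)}{\lambda^3(\mu\edr^\lambda+1-\mu)}\,g(\lambda)$ with $g(\lambda)=(\lambda+2\mu-1)\edr^\lambda-\mu\edr^{2\lambda}+1-\mu$, is strictly negative on $\R^*$, by claiming that $G(u)=u\ln u+(2\mu-1)u-\mu u^2+1-\mu$ is strictly decreasing because $G'\left(\frac{1}{2\mu}\right)=-\ln(2\mu)+2\mu(1-2\mu)<0$. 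But $G'(u)=\ln u+2\mu(1-u)$, so in fact $G'\left(\frac{1}{2\mu}\right)=(2\mu-1)-\ln(2\mu)$, which is \emph{strictly positive} for every $\mu\neq\frac12$ (since $\ln t<t-1$). Hence $G'$ is positive on an interval, $g$ is not single-signed on each half-line (e.g.\ for $\mu=0.9$ one has $g(-1)\approx-0.095<0$ while $g(-5)>0$), and — exactly as you note — the r.h.s.\ of the second-order ODE behaves like $\kappa_3/\lambda$ near the origin and so takes the wrong sign on one side of $0$ whenever $\kappa_3\neq0$. The paper's ``every critical point is a local maximum'' mechanism therefore does not apply to an asymmetric Bernoulli, and your observation to that effect is the key correction.

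Your substitute argument is sound. Working with $\psi(\lambda)=\lambda\mathcal{K}'(\lambda)-2\mathcal{K}(\lambda)$ (so $h'=2\psi/\lambda^3$ on $\R^*$), the identity $\psi''(\lambda)=\lambda\mathcal{K}'''(\lambda)$ together with $\mathcal{K}'''=p(1-p)(1-2p)$ and the single sign change of $1-2p$ at $\lambda^*=\ln\frac{1-\mu}{\mu}$ gives the three-interval sign pattern of $\psi''$; I verified the resulting bookkeeping for $\psi'$ and $\psi$, the limits $\psi'(\lambda)\to-(1-\mu)$ and $\psi(\lambda)\sim-(1-\mu)\lambda$ as $\lambda\to+\infty$, and the algebra showing $\psi(\lambda_0)=0$ and $h(\lambda_0)=\frac{1/2-\mu}{\ln(1/\mu-1)}$ at $\lambda_0=2\lambda^*$. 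Two minor points. First, state explicitly that $h'(0)=\kappa_3/3\neq0$ for $\mu\neq\frac12$, so $\lambda=0$ is not itself a critical point and counting zeros of $\psi$ on $\R^*$ does count all critical points of $h$. Second, your $\psi$-analysis actually handles $\mu=\frac12$ directly (then $\lambda^*=0$, the middle interval is empty, $\psi''<0$ on $\R^*$, and $0$ is the unique critical point and global maximum); this is preferable to deferring to the Rademacher moment computation, which locates the maximum at $0$ but does not by itself establish uniqueness of the critical point.
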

\begin{proof}
Let us first prove that $h$ admits a unique critical point, which is a global maximum, by using Proposition~\ref{prop:ODEs} and the remark that follows.  ODEs~\eqref{eq:ODE-first-order} and~\eqref{eq:ODE-second-order} are respectively
\begin{equation}
    h'(\lambda)+\frac{2}{\lambda}h(\lambda)\coloneqq r(\lambda)=\frac{2\mu (1-\mu )(\edr^\lambda-1)}{\lambda^2(\mu \edr^\lambda+1-\mu )}\,,h(0)=\mu (1-\mu)
\end{equation}
and
\begin{equation}
    h''(\lambda)+\frac{3}{\lambda}h'(\lambda)=r'(\lambda)+\frac{r(\lambda)}{\lambda}=\frac{2\mu (1-\mu )}{\lambda^3(\mu \edr^\lambda+1-\mu )}\left((\lambda+2\mu -1)\edr^\lambda-\mu \edr^{2\lambda}+1-\mu \right),
\end{equation}
with $ h'(0)=\frac{\mu (1-\mu )(2\mu -1)}{3}$. 
Let us denote $g(\lambda)=(\lambda+2\mu -1)\edr^\lambda-\mu \edr^{2\lambda}+1-\mu $, $u=\edr^\lambda>0$ and $G(u)=u\ln(u)+(2\mu -1)u-\mu u^2+1-\mu $. We have $G''(u)=\frac{1}{u}-2\mu $ so that $G''$ is positive on $\left[0,\frac{1}{2\mu}\right]$ and negative on $[\frac{1}{2\mu},+\infty)$. Since $G'\left(\frac{1}{2\mu}\right)=-\ln(2\mu )+2\mu (1-2\mu )<0$, for $\mu >\frac{1}{2}$, we have the fact that $G'$ is always strictly negative on $\mathbb{R}_+$. Thus, $G$ is a strictly decreasing function of $u$ and hence $g$ is also a strictly decreasing function of $\lambda$. Note that $g(0)=0$, so that $g$ is positive on $\mathbb{R}_-$ and negative on $\mathbb{R}_+$. Thus, $r'(\lambda)+\frac{r(\lambda)}{\lambda}$ is always strictly negative (there is a factor $\lambda^3$ in the denominator that changes sign at $\lambda=0$). We conclude that a point $\lambda_2\neq 0$, where $h'(\lambda_2)=0$, always satisfies $h''(\lambda_2)<0$ and therefore it is always a local maximum of $h$. 

By differentiating $h$, we observe that the global maximizer of $h$ is obtained as the unique solution (in $\lambda_0$) of the equation
\begin{equation*}
     2(\mu \edr^{\lambda_0}+1-\mu )\ln(\mu \edr^{\lambda_0}+1-\mu )-\mu \lambda_0\left((1+\mu )\edr^{\lambda_0}+1-\mu \right)=0.
\end{equation*}
It is easy to verify that $\lambda_0=2\ln\left(\frac{1-\mu}{\mu}\right)$, and that this leads to the optimal proxy variance as stated.
\end{proof}

\begin{figure}[!ht]
    \centering
    \includegraphics[width=.5 \textwidth]{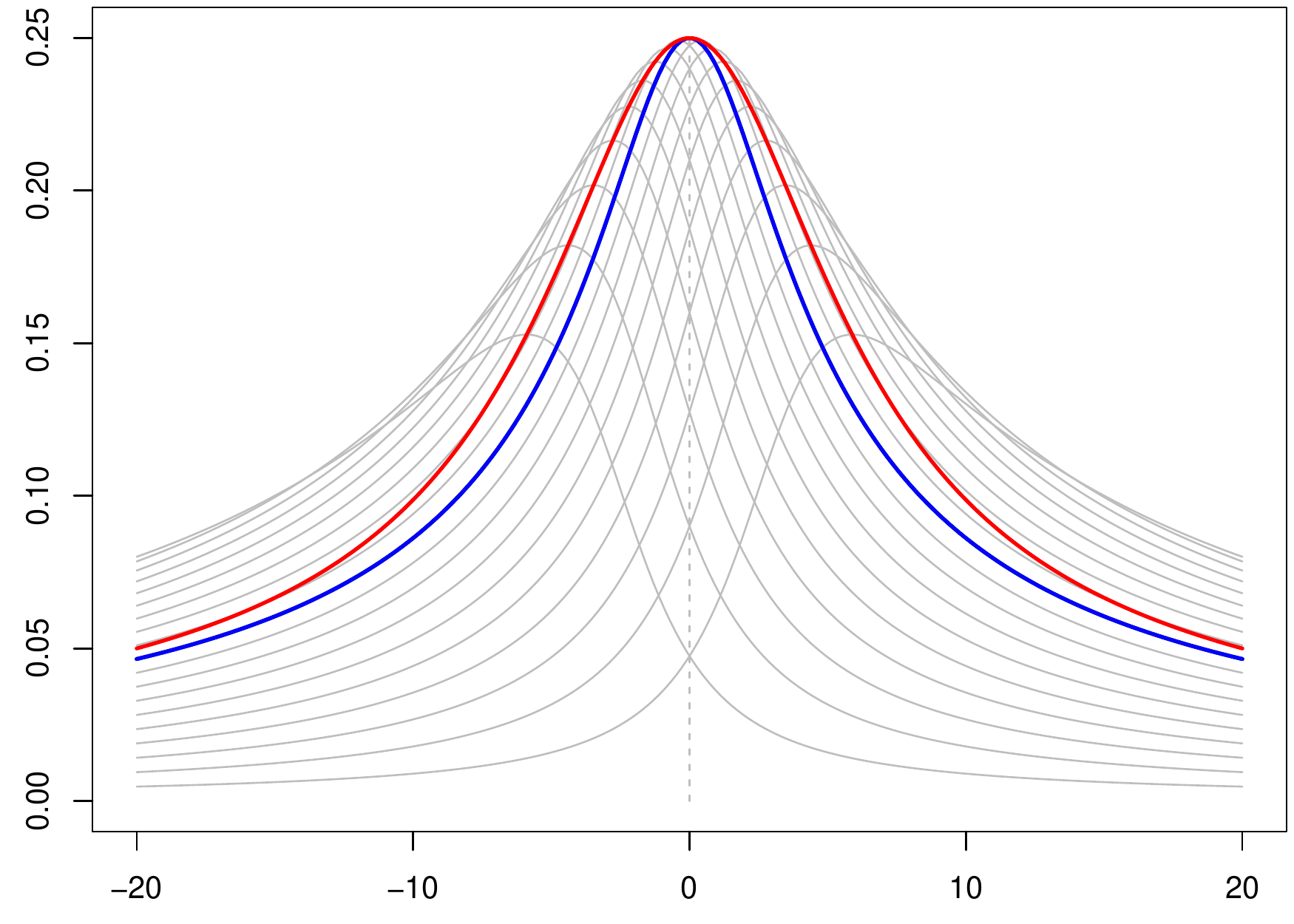}
    \caption{    \label{fig:bernoulli}
    Illustration of the function $h$ for $\Ber(\mu)$ random variables 
    with $\mu$ varying from $0.05$ to $0.95$  (gray curves), and of the particular case of $\Ber\left(\frac{1}{2}\right)$ (blue curve). The maximum of $h$ is realized at zero, only in the $\Ber\left(\frac{1}{2}\right)$ case, which corresponds to a symmetric distribution. Maxima of $h$, corresponding to the optimal proxy variance, are depicted by the red curve. \xaxis 
    }
\end{figure}
\pagebreak

%%%%%%%%%%%%%%%%%%%%%%%%%%%%%%%%%%%%
\subsection{Triangular distribution\label{sec:triangular}}
%%%%%%%%%%%%%%%%%%%%%%%%%%%%%%%%%%%%

We say that $X\sim\text{Tri}(a,b)$ is a triangular random variable on $(-a,b)$, for any $a,b>0$, if it is characterized by a density equal to 
\begin{align}
    f(x;a,b) = \left\{
    \begin{array}{ll}
          \frac{2}{a(a+b)}(x+a) &\text{ if } -a<x<0,\\
          \frac{2}{b(a+b)}(b-x) &\text{ if } 0<x<b.
    \end{array}
    \right.
\end{align}
See \citet{kotz2004beyond} for details and properties of such distributions. A recent review of research developments regarding the triangular distribution appears in \cite{nguyen2017triangular}.

\begin{proof}[Proof of Proposition~\ref{prop:symmetry-strict} for the triangular distribution]$ $\newline
The third cumulant is equal to
$$\E[(X-\E[X])^3]=\frac{(b-a)(2a+b)(2b+a)}{270},$$
so by virtue of Proposition~\ref{prop:skewness}, a triangular random variable may only be strictly sub-Gaussian when $a=b$. That is when it is symmetric. 

Conversely, when the distribution is symmetric with $a=b$, we can easily express the moments of even order in the form
$$\E[X^{2j}]=\frac{2a^{2j}}{(2j+1)(2j+2)},$$
so that the sufficient moment condition of Proposition~\ref{prop:symmetric-moments} is equivalent to
$$ \frac{\E[X^{2j}]}{(2j)!}\leq \frac{(\Var[X])^j}{2^j j!} \,\Leftrightarrow\, \frac{2}{(2j+2)!}\leq \frac{1}{12^j j!}.$$
In other words, the only remaining inequality is to show that
$$12^j j!\leq \frac{(2j+2)!}{2} \,,\,\forall j\geq 1.$$
The result is true for $j\in\{1,2,3,4\}$ by direct computation. We then make the decomposition:
$$\frac{(2j+2)!}{2}=(j+1)2^j \prod_{i=1}^j(2i+1)\geq (j+1)2^j 3\times 5\times \prod_{i=3}^j 7=\frac{15}{49}(j+1)2^j 7^j,$$
which yields
$$12^j j!\leq \frac{(2j+2)!}{2}\,\Leftrightarrow \, j+1\geq \frac{15}{49} \, \Leftrightarrow \, j\geq 3,$$
thus verifying the sufficient condition in the symmetric case.
\end{proof}

For the general case, we first observe that
$$\E[\edr^{{\lambda}(X-\E[X])}]=\frac{2}{ab(a+b){\lambda}^2}\left[a \edr^{\frac{{\lambda}(a+2b)}{3}}+b \edr^{\frac{-{\lambda}(b+2a)}{3}}-(a+b)\edr^{\frac{{\lambda}(a-b)}{3}}\right].$$
We further observe, numerically, that the difference $\Delta$ introduced in~\eqref{eq:Delta} admits a unique minimum, and also that the $h$ function~\eqref{eq:h-def} admits a unique (global) maximum; see Figure~\ref{fig:tri}. The optimal proxy variance can be obtained numerically, by minimizing~\eqref{eq:Delta}, as detailed in Proposition~\ref{prop:sigma_opt-NSC}.

\begin{figure}[!ht]
    \centering
    \begin{subfigure}{0.49\textwidth}
            \centering
            \includegraphics[width=\textwidth]{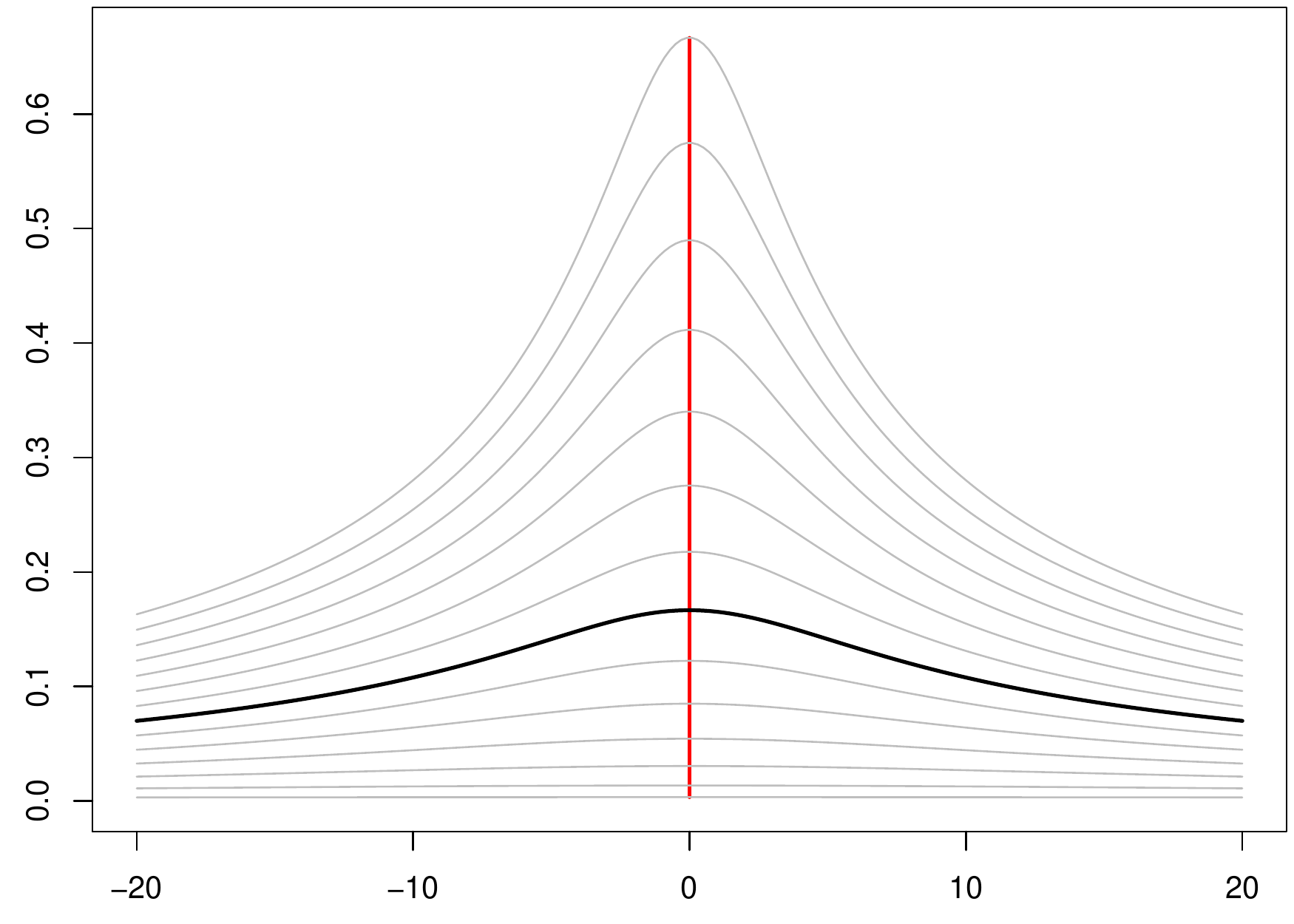}
    \subcaption{Symmetric $\Tri(a,a)$}
    \label{fig:tri_sym}
    \end{subfigure}
    \begin{subfigure}{0.49\textwidth}
            \centering
            \includegraphics[width=\textwidth]{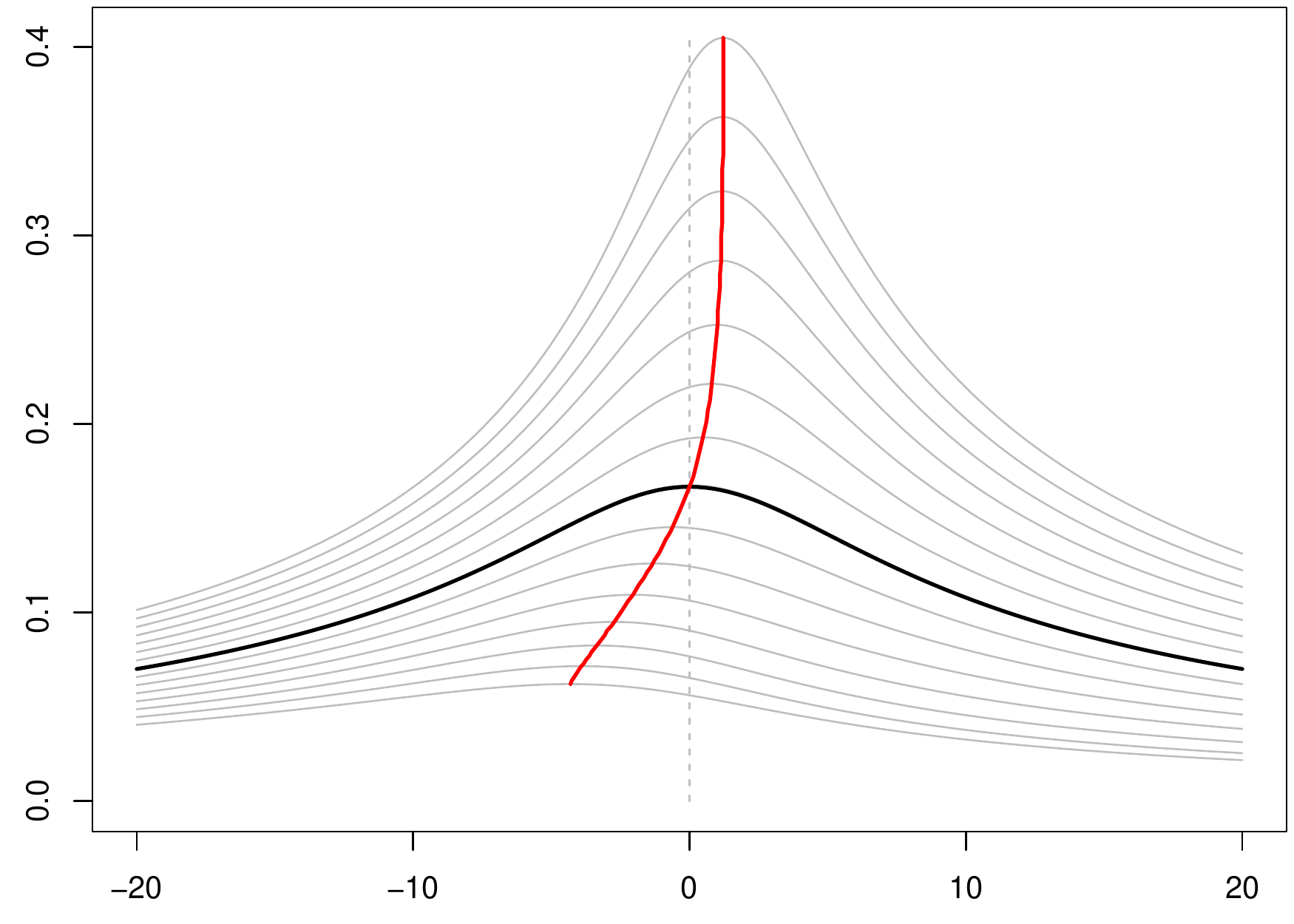}
    \subcaption{Asymmetric $\Tri(1,a)$}
    \label{fig:tri_asym}
    \end{subfigure}
    \caption{\label{fig:tri}
    Illustrations of function $h$ for  $\Tri(a,a)$ (left), and $\Tri(1,a)$ (right) distributions, with $a$ varying   from $0$  to $2$. The  black  curve  represents  the $h$ function  of the  $\Tri(1,1)$ distribution. Maxima of $h$ corresponding to the optimal proxy variances are depicted by the red curve.  \xaxis}
\end{figure}

%%%%%%%%%%%%%%%%%%%%%%%%%%%%%%%%%
\subsection{Uniform distribution\label{sec:uniform}}
%%%%%%%%%%%%%%%%%%%%%%%%%%%%%%%%%

In this section, we prove that the uniform distribution is strictly sub-Gaussian using a similar proof as for obtaining the optimal proxy variance in the Bernoulli case (i.e., Proposition~\ref{prop:bernoulli}). First, we observe that after translation/dilatation, we may always reduce the problem to the case of $X\sim \Unif([0,1])$. In this case, the moment generating function is straightforward to compute and we may write
\begin{equation}
    h(\lambda)=\frac{2}{\lambda^2}\ln\left[\frac{2 \text{ sh}(\frac{\lambda}{2})}{\lambda}\right],
\end{equation}
which is a symmetric and is a $\mathcal{C}^{\infty}$ function. It remains to prove that it attains its global maximum at $\lambda=0$. 

To this end, we use Proposition~\ref{prop:ODEs} and the remark following it.  ODEs~\eqref{eq:ODE-first-order} and~\eqref{eq:ODE-second-order} are respectively
\begin{equation}
    h'(\lambda)+\frac{2}{\lambda}h(\lambda)=r(\lambda)=\frac{\text{ ch}\left(\frac{\lambda}{2}\right)}{\lambda^2\text{ sh}\left(\frac{\lambda}{2}\right)} -\frac{2}{\lambda^3}, \text{ with } h(0)=\Var[X]=\frac{1}{12},
\end{equation}
and
\begin{equation}\label{UniformEDO}
    h''(\lambda)+\frac{3}{\lambda}h'(\lambda)=r'(\lambda)+\frac{r(\lambda)}{\lambda}=
    -\frac{2 s(\lambda)}{\lambda^4 \text{ sh}^2(\frac{\lambda}{2})},
\end{equation}
with $h(0)=\frac{1}{12}$, $h'(\lambda)=0$, and $s(\lambda)\coloneqq \lambda^2+\lambda \text{ sh}(\lambda)-4\text{ch}(\lambda)+4$. 
We now apply the same method as for the Bernoulli case. That is, we need to show that the r.h.s. of~\eqref{UniformEDO} is negative, which amounts to showing that the function $s$ is positive. 
Let us first observe that the result holds locally around $\lambda=0$. Indeed, we have $h(0)=\frac{1}{12}$, $h'(0)=0$ and $h''(0)=-\frac{1}{720}<0$. 
Then, observe that
\bea s(\lambda)&=&\lambda^2+\lambda\text{ sh}(\lambda)-4\text{ ch}(\lambda)+4,\cr
s'(\lambda)&=&2\lambda-3\text{ sh}(\lambda)+\lambda \text{ ch}(\lambda),\cr
s''(\lambda)&=&2-2\text{ ch}(\lambda)+\lambda \text{ sh}(\lambda),\cr
s^{(3)}(\lambda)&=&\lambda \text{ ch}(\lambda)-\text{ sh}(\lambda)\text{, and}\cr
s^{(4)}(\lambda)&=&\lambda \text{ sh}(\lambda),
\eea
from which we immediately conclude that
\begin{equation} s(0)=0,\, s'(0)=0,\, s''(0)=0,\, s^{(3)}(0)=0 \text{, and } s^{(4)}(0)=0.
\end{equation}
Obviously $s^{(4)}$ is strictly positive on $\mathbb{R}^*$, thus $s^{(3)}$ is a strictly increasing function on $\mathbb{R}$. Since $s^{(3)}(0)=0$, we conclude that $s^{(3)}$ is strictly negative on $\mathbb{R}_-^*$ and strictly positive on $\mathbb{R}_+^*$. Thus, $s''$ is strictly decreasing on $\mathbb{R}_-^*$ and strictly increasing on $\mathbb{R}_+^*$. Finally, since $s''(0)=0$, we conclude that $s''$ is strictly positive on $\mathbb{R}^*$, therefore $s'$ is a strictly increasing function on $\mathbb{R}$. Since $s'(0)=0$ then $s'$ is strictly negative on $\mathbb{R}_-^*$ and strictly positive on $\mathbb{R}_+^*$ so that $s$ is strictly decreasing on $\mathbb{R}_-^*$ and strictly increasing on $\mathbb{R}_+^*$. Since $s(0)=0$, we conclude that $s$ is strictly positive on $\mathbb{R}^*$.
This proves that $h$ has only one unique critical point, which is therefore the global maximizer. 

In conclusion for $X\sim \Unif([a,b])$ with $a<b,$ we have the celebrated result that
\beqq \sigma_\opt^2=\Var[X]=\frac{1}{12}(b-a)^2.\eeqq

\subsubsection{Sum of independent uniform variables}
We may now consider the sum of independent (but not necessarily identically distributed) uniform random variables. Let $(X_1,\dots,X_n)$ be independent variables with $X_i\sim \Unif([a_i,b_i])$ for  $i\in \{ 1,\ldots,n\}$, with $a_i<b_i$ and denote $S_n=\underset{i=1}{\overset{n}{\sum}} X_i$. Since the family of uniform distributions is invariant under translation and multiplication by a constant, we have the standard result that
\beqq Z_i=\frac{X_i-a_i}{b_i-a_i}\sim \Unif([0,1]).\eeqq 
Thus, since $X_i=(b_i-a_i)Z_i+a_i$, we have
\beqq h_{X_i}(\lambda)=(b_i-a_i)^2 \,h_{\Unif([0,1])}((b_i-a_i)\lambda),\eeqq
and then, since the $h$ function of a sum of independent random variables is the sum of the $h$ functions of the variables and by independence of 
the variables $(X_i)_{i\leq n}$, we obtain
\beqq h_{S_n}(\lambda)=\sum_{i=1}^n (b_i-a_i)^2 h_{\Unif([0,1])}((b_i-a_i)\lambda).\eeqq
The  sum of the r.h.s. of the equation above is composed of functions that are all strictly increasing on $\mathbb{R}_-$ and all strictly decreasing 
on $\mathbb{R}_+$. Thus, it too is strictly increasing on $\mathbb{R}_-$ and strictly decreasing on $\mathbb{R}_+$. In particular, the global maximum is unique and obtained at $\lambda=0$ for which we find:
\beqq \sigma_\opt^2[S_n]=\sum_{i=1}^n \Var[X_i]=\frac{1}{12}\sum_{i=1}^n(b_i-a_i)^2\eeqq

\begin{remark}
Note in particular that the sum of two independent uniform variables is generically a trapezoid distribution, with symmetric triangular parts, or a symmetric (up to translation) triangular distribution. However the general asymmetric triangular case, considered in Section~\ref{sec:triangular}, cannot be expressed as a sum of independent uniform distributions.
\end{remark}

%%%%%%%%%%%%%%%%%%%%%%%%%%%%%%%%%%%%%%%%%%%%%%%%
\subsection{Kumaraswamy distribution}\label{sec:kuma}
%%%%%%%%%%%%%%%%%%%%%%%%%%%%%%%%%%%%%%%%%%%%%%%%

Kumaraswamy distribution is characterized by the density on $(0,1)$:
\begin{align*}
    f(x;\alpha,\beta)=\alpha\beta x^{\alpha-1}(1-x^\alpha)^{\beta-1},
\end{align*}
for $\alpha,\beta>0$, which yields the simple distribution function of form
\begin{align*}%\label{eq:zero-skewness-kuma}
    F(x;\alpha,\beta)=1-(1-x^\alpha)^{\beta}.
\end{align*}
The distribution was first studied in \cite{kumaraswamy1980generalized} and was considered in details by \cite{jones2009kumaraswamy}. 

\begin{proof}[Proof of Proposition~\ref{prop:symmetry-strict} for the Kumaraswamy distribution]$ $\newline
The Kumaraswamy distribution is symmetric if and only if $\alpha=\beta=1$  \citep{jones2009kumaraswamy}. In this case, it reduces to the uniform distribution, which is strictly sub-Gaussian, as was proved in Section~\ref{sec:uniform}.

Conversely, let us now consider any potentially strictly sub-Gaussian Kumaraswamy distribution. It must then satisfy the necessary conditions of Proposition~\ref{prop:skewness}. The third cumulant $\kappa_3$ vanishes if and only if the parameters satisfy the relation
\begin{align}\label{eq:zero-skewness-kuma}
    \alpha=\frac{1}{\beta-(\beta-1)2^{\frac{1}{\beta}}}.
\end{align}
In such a case, a numerical evaluation of the $4^{\text{th}}$ cumulant $\kappa_4=\E[(X-\E[X])^4]- 3\Var[X]^2$ demonstrates that it is negative, thus both necessary conditions of Proposition~\ref{prop:skewness} hold. However, a numerical evaluation also shows that the maximizer of the $h$ function is never located at zero (i.e., the condition of Corollary~\ref{cor:NSC-strict-h-max-0} is not satisfied), except for $\alpha=\beta=1$ (i.e., the uniform distribution). This is illustrated in Figure~\ref{fig:kuma}, where the function $h$ is plotted for $(\alpha,\beta)$, satisfying relation~\eqref{eq:zero-skewness-kuma}, with $\beta$ varying in the interval $[10^{-3}, 5]$. The maximum of $h$ is illustrated with the red curve, showing that the global maximizer always deviates from zero, except for the case of the uniform distribution (the black curve) and the degenerate symmetric Bernoulli distribution (the blue curve). 
This proves the necessity of symmetry and concludes the proof.
\end{proof}

\begin{figure}[!ht]
    \centering
    \includegraphics[width=.5\textwidth]{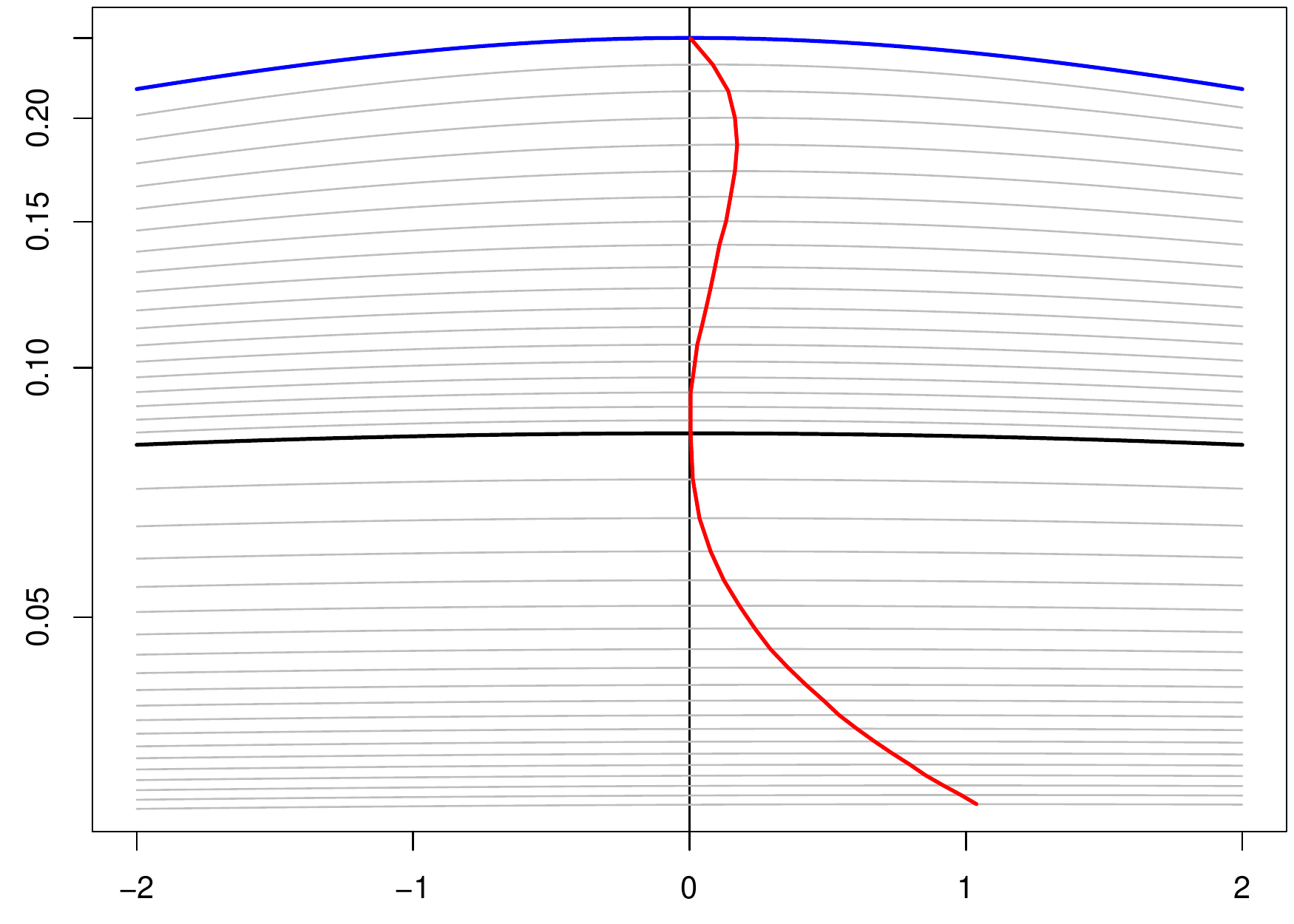}
    \caption{\label{fig:kuma}
    Function $h$ for Kumaraswamy distributions for $(\alpha,\beta)$ satisfying relation~\eqref{eq:zero-skewness-kuma}, with $\beta$ varying from $10^{-3}$ (top gray curve) to $5$ (bottom gray curve). The black curve represents the $h$ function of the uniform distribution. \bernoullitext 
    Maxima of $h$, corresponding to the optimal proxy variances, are depicted by the red curve. 
    In particular, the maxima are located at zero only for the symmetric distributions (i.e., the uniform distribution and Ber$\left(\frac{1}{2}\right)$). \xaxis Log-scale on the $y$-axis.  }
\end{figure}
\pagebreak

\subsection{Beta distribution\label{sec:beta}}

The optimal proxy variance for the $\Beta(\alpha,\beta)$  distribution was derived in \cite{marchal2017sub}, Theorem 2.1. In particular, this theorem states that the optimal proxy variance is equal to the variance if and only $\alpha=\beta$. That is, if and only if the beta distribution is symmetric. This proves Proposition~\ref{prop:symmetry-strict} for the beta distribution. The $h$ function and the optimal proxy variance is illustrated on Figure~\ref{fig:beta}. 

\begin{figure}[!ht]
    \centering
    \begin{subfigure}{0.49\textwidth}
            \centering
            \includegraphics[width=\textwidth]{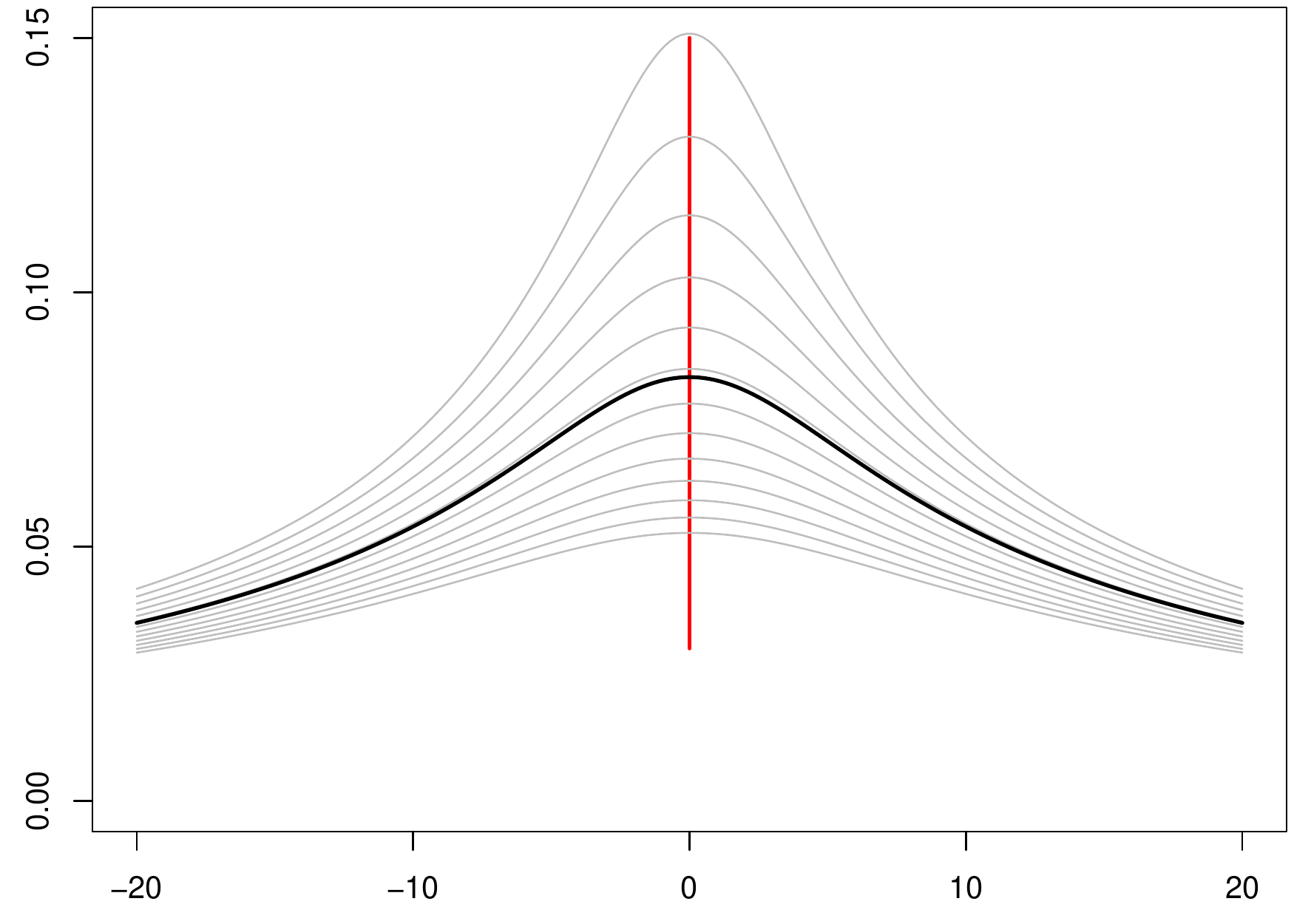}
    \subcaption{Symmetric $\Beta(a,a)$}
    \label{fig:beta_sym}
    \end{subfigure}
    \begin{subfigure}{0.49\textwidth}
            \centering
            \includegraphics[width=\textwidth]{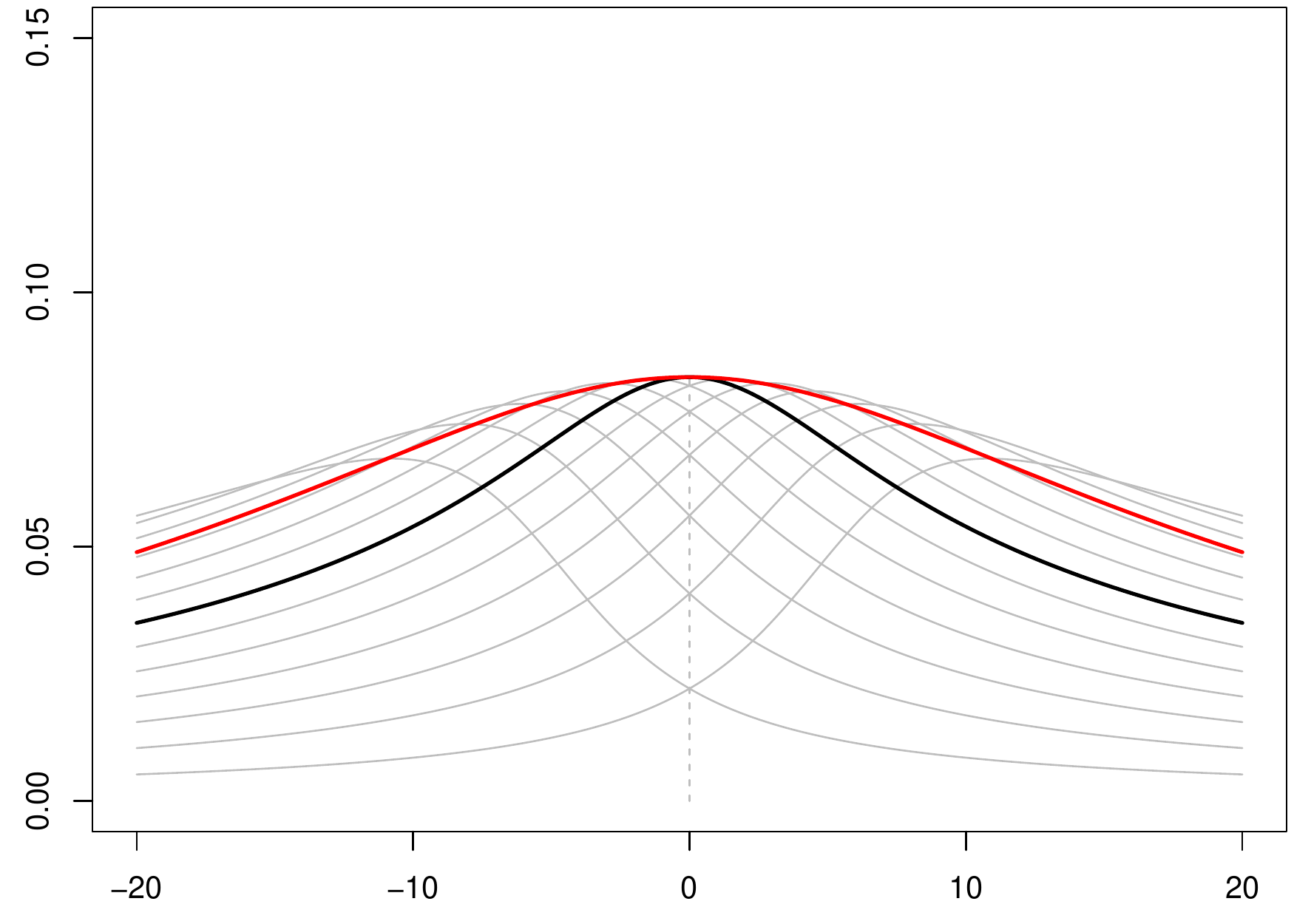}
    \subcaption{Asymmetric $\Beta(a,2-a)$}
    \label{fig:beta_asym}
    \end{subfigure}
    \caption{\label{fig:beta}
    Illustrations of function $h$ for  $\Beta(a,a)$ (left), and $\Beta(a,2-a)$ (right) distributions, with $a$ varying   from $0.2$  to $1.8$. The black curve represents the $h$ function of the uniform distribution. Maxima of $h$ corresponding to the optimal proxy variance are depicted by the red curve.  \xaxis}
\end{figure}
\pagebreak

\appendix

\section{Technical results\label{sec:appendix}}

\subsection[A lemma regarding the supremum of h]{A lemma regarding the supremum of $h$}
\begin{lemma}\label{lem:sup-equals-max}
	For variables with bounded support, the supremum in \eqref{eq:supremum} is a maximum.
\end{lemma}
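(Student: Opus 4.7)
The plan is to combine the almost-sure bound on $|X-\mu|$ with the continuity of $h$ to upgrade the supremum to a maximum via a standard compactness argument.

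First I would set up the decay of $h$ at infinity. Since $X$ is almost surely bounded, there exists $M>0$ such that $|X-\mu|\le M$ almost surely. This yields $\mathbb{E}[\mathrm{e}^{\lambda(X-\mu)}]\le \mathrm{e}^{|\lambda|M}$ and hence $\mathcal{K}(\lambda)\le |\lambda|M$, so for $\lambda\neq 0$,
\begin{equation*}
0 < h(\lambda) \le \frac{2M}{|\lambda|},
\end{equation*}
where positivity comes from the Jensen-type bound~\eqref{eq:h-positive}. In particular, $h(\lambda)\to 0$ as $|\lambda|\to\infty$.

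Next I would invoke continuity. The function $h$ is continuous on $\mathbb{R}^{*}$ as a composition of smooth functions, and it is continuous at $0$ by~\eqref{eq:h(0)=Var(X)} (since the moment generating function of a bounded random variable is analytic, yielding the Taylor expansion around $\lambda=0$ and hence the extension $h(0)=\Var[X]$). In the non-degenerate case $\Var[X]>0$, choose $R>0$ large enough so that $2M/R < \Var[X] = h(0)$. Then $h(\lambda)<h(0)$ for every $|\lambda|>R$, so
\begin{equation*}
\sup_{\lambda\in\mathbb{R}} h(\lambda) = \sup_{\lambda\in[-R,R]} h(\lambda),
\end{equation*}
and the right-hand side is attained by continuity of $h$ on the compact set $[-R,R]$.

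Finally, I would dispose of the degenerate case: if $\Var[X]=0$, then $X=\mu$ almost surely, hence $\mathcal{K}\equiv 0$, $h\equiv 0$ on $\mathbb{R}^{*}$ with continuous extension $h(0)=0$, and the supremum $0$ is trivially attained. There is no real obstacle here; the only mild subtlety is making sure one handles the behaviour at $\lambda=0$ via the Taylor expansion rather than the naive formula, and picking the threshold $R$ explicitly so that the tail is dominated by $h(0)$, which is why the non-degenerate/degenerate split is needed.
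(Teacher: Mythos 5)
Your proof is correct and follows essentially the same route as the paper: bound $\mathcal{K}(\lambda)\le M|\lambda|$ using the almost-sure bound, deduce $h(\lambda)\to 0$ as $|\lambda|\to\infty$, and combine with continuity and positivity of $h$ to reduce to a compact interval. You are somewhat more explicit than the paper in choosing the threshold $R$ and in treating the degenerate case $\Var[X]=0$, but the underlying argument is identical.
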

\begin{proof}
	We have
$$ \exp(\lambda[X-\mu])\leq \exp(|\lambda|[M+|\mu|])\leq \exp(2|\lambda|M),$$
where $M$ is the maximum value of $|X|$ (which is finite since $X$ is almost surely bounded). Thus we obtain
\begin{equation}
    \frac{2}{\lambda^2}\mathcal{K}(\lambda)\leq \frac{4M|\lambda|}{\lambda^2}=\frac{4M}{|\lambda|}\underset{\lambda\to \infty}{\to} 0.
\end{equation}
Therefore, the supremum is not at infinity and since the function $\lambda \in \mathbb{R}\mapsto \frac{2\mathcal{K}(\lambda)}{\lambda^2}$ is continuous and positive, it must achieve its maximal value at finite values of $\lambda$. 
\end{proof}

\subsection{Proof of Proposition~\ref{prop:sigma_opt-NSC}\label{sec:proof-Delta}}

The proof of Proposition~\ref{prop:sigma_opt-NSC} is based on the study of the variations of the $\Delta$ function, defined in Equation~\eqref{eq:Delta}, which is the object of the next lemma.

We first observe that for any $\lambda\in \mathbb{R}^*$, the function $\sigma^2\mapsto \Delta(\sigma^2,\lambda)$ is strictly increasing. Moreover, at $\lambda=0$ we have:
\begin{equation}
    \Delta(\sigma^2,\lambda)=\left(\sigma^2-\Var[X]\right)\frac{\lambda^2}{2}-\E[(X-\mu]^3)\frac{\lambda^3}{3!}+\left(3\sigma^4-\E[(X-\mu)^4]\right)\frac{\lambda^4}{4!}+O(\lambda^5).
\end{equation}
Therefore, for $\sigma^2>\Var[X]$, the function $\lambda\mapsto \Delta(\sigma^2,\lambda)$ is strictly positive in a neighborhood of $\lambda=0$, while for $\sigma^2<\Var[X]$, the function is strictly negative in a neighborhood of $\lambda=0$. 
Thus we obtain the following lemma.
\begin{lemma}\label{lem:variations-of-Delta}
The variations of $\lambda\mapsto \Delta(\sigma^2,\lambda)$ depend on the value of $\sigma^2$, with respect to $\sigma_\opt^2$, as follows:
\begin{enumerate}
    \item for $\sigma^2>\sigma^2_\opt$, the function $\lambda\mapsto \Delta(\sigma^2,\lambda)$ is strictly positive on $\mathbb{R}$,
    \item for $\sigma^2=\sigma^2_\opt$, the function $\lambda\mapsto \Delta(\sigma^2_\opt,\lambda)$ is non-negative and there exists at least one point $\lambda_0\in \mathbb{R}$ for which $\Delta(\sigma^2_\opt,\lambda_0)=0$. In particular, since the function remains non negative, it implies that $\partial_\lambda\Delta(\sigma^2_\opt,\lambda_0)=0$ and $\partial^2_\lambda\Delta(\sigma^2_\opt,\lambda_0)\geq 0$, and
    \item for $\sigma^2<\sigma^2_\opt$, there exists an interval  not reduced to a point on which  $\lambda\mapsto \Delta(\sigma^2,\lambda)<0$,
\end{enumerate}
where the first and second derivatives of $\lambda\mapsto \Delta(\sigma^2,\lambda)$ are denoted by $\partial_\lambda\Delta$ and $\partial^2_\lambda\Delta$, respectively.
\end{lemma}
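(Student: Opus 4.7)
The plan is to bridge the $\Delta$ formulation to the $h$ formulation already used in Proposition~\ref{prop:general-proxy-variance}. The key observation is that, since the exponential is strictly monotonic, for any $\lambda\neq 0$ one has the equivalence
\begin{equation*}
\Delta(\sigma^2,\lambda)\geq 0 \;\Longleftrightarrow\; \frac{\lambda^2\sigma^2}{2}\geq \mathcal{K}(\lambda)\;\Longleftrightarrow\; \sigma^2\geq h(\lambda),
\end{equation*}
with the analogous equivalences for $>$, $=$ and $<$. Combined with the strict monotonicity of $\sigma^2\mapsto \Delta(\sigma^2,\lambda)$ for each fixed $\lambda\neq 0$ (which is immediate, since $\sigma^2$ enters $\Delta$ only through $\exp(\lambda^2\sigma^2/2)$), this equivalence reduces each assertion of the lemma to a known statement about $h$.

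I would first establish item~2, which carries all the substantive content. Proposition~\ref{prop:general-proxy-variance} together with Lemma~\ref{lem:sup-equals-max} provides a \emph{finite} $\lambda_0$ at which $h$ attains its maximum $\sigma_\opt^2$; translating via the equivalence yields $\Delta(\sigma_\opt^2,\lambda)\geq 0$ for every $\lambda\in\mathbb{R}$ and $\Delta(\sigma_\opt^2,\lambda_0)=0$. Since $\lambda_0$ is then a global minimum of the differentiable map $\lambda\mapsto \Delta(\sigma_\opt^2,\lambda)$, standard first- and second-order optimality conditions give $\partial_\lambda \Delta(\sigma_\opt^2,\lambda_0)=0$ and $\partial_\lambda^2 \Delta(\sigma_\opt^2,\lambda_0)\geq 0$. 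Items~1 and~3 then follow by monotonicity in $\sigma^2$: for $\sigma^2>\sigma_\opt^2$ and any $\lambda\neq 0$, $\Delta(\sigma^2,\lambda)>\Delta(\sigma_\opt^2,\lambda)\geq 0$ (the degenerate equality $\Delta(\sigma^2,0)=0$ being unavoidable, so ``strictly positive on $\mathbb{R}$'' is to be read as on $\mathbb{R}^*$); while for $\sigma^2<\sigma_\opt^2$, monotonicity at the $\lambda_0$ from item~2 gives $\Delta(\sigma^2,\lambda_0)<\Delta(\sigma_\opt^2,\lambda_0)=0$, and continuity of $\lambda\mapsto \Delta(\sigma^2,\lambda)$ extends this strict inequality to an open interval around $\lambda_0$, as required.

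The only genuinely nontrivial ingredient is the attainability of the supremum at a \emph{finite} $\lambda_0$, which is exactly the content of Lemma~\ref{lem:sup-equals-max} and relies crucially on the bounded support of $X$. Without it, one could only extract a sequence $\lambda_n$ with $\Delta(\sigma_\opt^2,\lambda_n)\to 0$, which would be insufficient for the pointwise first- and second-order conclusions of item~2 and for the open-interval conclusion of item~3. Everything else is routine bookkeeping of the monotonicity and continuity observations already collected in the paragraphs preceding the lemma statement.
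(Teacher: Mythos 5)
Your route is genuinely different from the paper's. The paper argues by contradiction directly on $\Delta$, using the strict monotonicity of $\sigma^2\mapsto\Delta(\sigma^2,\lambda)$ together with joint continuity in $(\sigma^2,\lambda)$ and the minimality of $\sigma^2_\opt$; in particular its proof of item~2 ("if $\Delta$ were strictly positive we could lower $\sigma^2$ without $\Delta$ vanishing") leaves the compactness step implicit. Your reduction through the dictionary $\Delta(\sigma^2,\lambda)\geq 0\Leftrightarrow\sigma^2\geq h(\lambda)$ (for $\lambda\neq 0$) and the explicit appeal to Lemma~\ref{lem:sup-equals-max} for a \emph{finite} maximizer $\lambda_0$ of $h$ is cleaner on precisely that point, and your observation that item~1 can only hold on $\mathbb{R}^*$ (since $\Delta(\sigma^2,0)=0$ identically) is a correct reading of a small imprecision in the statement.

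There is, however, one gap in your item~3. You deduce $\Delta(\sigma^2,\lambda_0)<\Delta(\sigma^2_\opt,\lambda_0)=0$ from monotonicity in $\sigma^2$ at the maximizer $\lambda_0$ of item~2, but that monotonicity is strict only for $\lambda_0\neq 0$ — a point you yourself make when noting that $\sigma^2$ enters only through $\exp(\lambda^2\sigma^2/2)$. In the strictly sub-Gaussian case (e.g.\ $\Unif([0,1])$ or $\Ber(1/2)$) the global maximizer of $h$ is exactly $\lambda_0=0$, and then $\Delta(\sigma^2,0)=0$ for \emph{every} $\sigma^2$, so the claimed strict inequality fails and your argument produces no interval of negativity. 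The conclusion is still true and the fix is immediate within your own framework: since $h$ is continuous at $0$ with $h(0)=\Var[X]=\sigma^2_\opt>\sigma^2$, there is some $\lambda_1\neq 0$ near $0$ with $h(\lambda_1)>\sigma^2$, hence $\Delta(\sigma^2,\lambda_1)<0$, and continuity of $\lambda\mapsto\Delta(\sigma^2,\lambda)$ yields the required interval (equivalently, use the Taylor expansion $\Delta(\sigma^2,\lambda)=(\sigma^2-\Var[X])\lambda^2/2+O(\lambda^3)$, which is the paper's implicit device). You should add this case distinction to make item~3 complete.
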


\begin{proof}
The proof is based on fact that $\sigma^2 \mapsto \Delta(\sigma^2,\lambda)$ is strictly increasing, and the fact that  $(\sigma^2,\lambda) \mapsto \Delta(\sigma^2,\lambda)$ is continuous. 

1. Assume by contradiction that $\Delta$ is not strictly positive. Then, since it is non-negative, there must exist at least one point $\lambda_1$ for which $\Delta(\sigma^2,\lambda_1)=0$ with $\lambda_1\neq 0$ (because for $\sigma^2>\sigma^2_\opt\geq \Var[X]$, we know that $\Delta$ is strictly positive around $\lambda=0$). Thus, for any $\tilde{\sigma}^2<\sigma^2$ we have $\Delta(\tilde{\sigma}^2,\lambda_1)<\Delta(\sigma^2,\lambda_1)=0$, so that $X$ is not $\tilde{\sigma}^2$-sub-Gaussian, hence $\tilde{\sigma}^2< \sigma^2_\opt$ and by taking the limit $\tilde{\sigma}^2\to \sigma^2$ from below, we get $\sigma^2\leq \sigma^2_\opt$, which is a contradiction.

2. At $\sigma^2=\sigma^2_\opt$, the function $\Delta$ must vanish at least at one point $\lambda_0\in \mathbb{R}$, while remaining non-negative on $\mathbb{R}$. Indeed, the function $\Delta$ is non-negative by the sub-Gaussian definition, but if $\Delta$ was strictly positive on $\mathbb{R}$, then the continuity of $\Delta$, relatively to ($\sigma^2,\lambda)$, would imply that we may lower $\sigma^2$ without $\Delta$ vanishing. This would be in contradiction with the minimality of $\sigma^2_\opt$.

3. For $\sigma^2<\sigma^2_\opt$, there exists at least a point $\lambda_1\in \mathbb{R}$ for which $\Delta(\sigma^2,\lambda_1)<0$. By continuity of the function $\lambda\mapsto \Delta(\sigma^2,\lambda)$, this implies that there exists a neighborhood of $\lambda_1$ in which $\Delta$ is strictly negative, which concludes the proof. 
\end{proof}

We are now ready to prove Proposition~\ref{prop:sigma_opt-NSC}.

\begin{proof}[Proof of Proposition~\ref{prop:sigma_opt-NSC}]
	Proposition~\ref{lem:variations-of-Delta} indicates that if $\sigma^2=\sigma^2_\opt$, then $\Delta\geq 0$, and there exists $\lambda_0\in \mathbb{R}$, such that $\Delta(\sigma^2,\lambda_0)=0$ and $\partial_\lambda\Delta(\sigma^2,\lambda_0)=0$. Conversely, let us assume that $\Delta\geq 0$ and  $\exists \lambda_0\in \mathbb{R}\text{, such that } \Delta(\sigma^2,\lambda_0) \text{ and } \partial_\lambda\Delta(\sigma^2,\lambda_0)=0 $. Then, since $\Delta\geq 0$, we have $\sigma^2\geq \sigma^2_\opt$, and since for $\tilde{\sigma}^2<\sigma^2$, $\Delta(\tilde{\sigma}^2,\lambda_0)<0$, we also have $\sigma^2\leq \sigma^2_\opt$. Thus $\sigma^2 = \sigma^2_\opt$, which concludes the proof.
\end{proof}

\section*{Acknowledgements}
O.M. would like to thank Universit\'e Lyon $1$, Universit\'e Jean Monnet and Institut Camille Jordan for material support. H.D.N. is  funded by Australian Research Council grants: DE170101134 and DP180101192. This work was supported by the LABEX MILYON (ANR-10-LABX-0070) of Universit\'e de Lyon, within the program "Investissements d'Avenir" (ANR-11-IDEX-0007) operated by the French National Research Agency (ANR). 

\bibliographystyle{apalike}
\bibliography{biblio}

\end{document}